\definecolor{lcred}{HTML}{973B3A}
\definecolor{lcbeige}{HTML}{D9BFAC}
\definecolor{lcblue}{HTML}{4F6AA5}
\newtheorem{defin}{Definition}[section]
\newtheorem{theorem}[defin]{Theorem}
\newtheorem{lemma}[defin]{Lemma}
\newcommand{\R}{\mathbb{R}}
\newcommand{\Z}{\mathbb{Z}}
\newcommand{\N}{\mathbb{N}}
\DeclareMathOperator{\Cayley}{Cayley}
\DeclareMathOperator{\Vor}{Vor}
\DeclareMathOperator{\Del}{Del}
\DeclareMathOperator{\GL}{GL}
\DeclareMathOperator{\Ort}{O}
\DeclareMathOperator{\cone}{cone}
\begin{document}

\title{The chromatic number of 4-dimensional lattices}

\address{F.~Vallentin, S.~Weißbach, M.C.~Zimmermann, Department Mathematik/Informatik, Abteilung
  Mathematik, Universit\"at zu K\"oln, Weyertal~86--90, 50931 K\"oln,
  Germany\vspace*{2ex}}

\author{Frank Vallentin}
\email{frank.vallentin@uni-koeln.de}

\author{Stephen Weißbach}
\email{stepwb71@gmail.com}

\author{Marc Christian Zimmermann}
\email{marc.christian.zimmermann@gmail.com}

\date{November 8, 2024}

\begin{abstract}
  The chromatic number of a lattice in $n$-dimensional Euclidean space
  is defined as the chromatic number of its Voronoi graph. The Voronoi graph is
  the Cayley graph on the lattice having the strict Voronoi vectors as
  generators. In this paper we determine the chromatic number of all
  $4$-dimensional lattices.

  To achieve this we use the known classification of $52$ parallelohedra in
  dimension~$4$. These $52$ geometric types yield $16$ combinatorial
  types of relevant Voronoi graphs. We discuss a systematic approach
  to checking for isomorphism of Cayley graphs of lattices.
  
  Lower bounds for the chromatic number are obtained from choosing appropriate
  small finite induced subgraphs of the Voronoi graphs. Matching
  upper bounds are derived from periodic colorings. To determine the chromatic
  numbers of these finite graphs, we employ a SAT solver.
\end{abstract}

\maketitle

\markboth{F. Vallentin, S. Weißbach, and M.C. Zimmermann}
{The chromatic number of 4-dimensional lattices}

\centerline{\large \em Dedicated to Rudolf Scharlau in occasion of his
  72nd birthday}

\section{Introduction}

Let $\Lambda \subseteq \R^n$ be an $n$-dimensional lattice in
$n$-dimensional Euclidean space. The \emph{chromatic number} of
$\Lambda$ is the smallest number of colors one needs to color lattice
translates of the Voronoi cell
\[
  V(\Lambda) = \{ x \in \R^n : \|x\| \leq \|x - v\| \text{ for
    all } v \in \Lambda\}
\]
so that distinct lattice translates $v + V(\Lambda)$ and
$w + V(\Lambda)$, with $v \neq w$, receive different colors whenever
the intersection $(v + V(\Lambda)) \cap (w + V(\Lambda))$ is a common
$(n-1)$-dimensional face (a facet) of the two polytopes
$v + V(\Lambda)$ and $w + V(\Lambda)$. This chromatic number is
denoted by $\chi(\Lambda)$.  

\smallskip

The chromatic number of lattices was first considered by Dutour
Sikiri\'c, Madore, Moustrou, Vallentin in \cite{Dutour-2021}.

\smallskip

Equivalently, the chromatic number of a lattice $\Lambda$ is the
chromatic number of the (infinite) Cayley graph on the additive group
$\Lambda$ with strict Voronoi vectors as its set of generators. The \emph{Voronoi vectors} of $\Lambda$, are the differences $v-w$ such that the intersection of the distinct translates $v+ V(\Lambda)$ and $w + V(\Lambda)$ is a common face; but not necessarily a common facet. A Voronoi vector as above is a \emph{strict Voronoi vector} if this common intersection is a facet, we denote the set of such vectors by $\Vor(\Lambda)$.
Thus,
\[
\chi(\Lambda) = \chi(\Cayley(\Lambda, \Vor(\Lambda))),
\]
where the vertices of the Cayley graph are all elements of $\Lambda$
and two distinct vertices $v, w$ are adjacent whenever the difference
$v - w$ lies in~$\Vor(\Lambda)$. We call the Cayley graph
$\Cayley(\Lambda, \Vor(\Lambda))$ the \emph{Voronoi graph} of the
lattice $\Lambda$.

\smallskip

Many natural questions regarding $\chi(\Lambda)$ remain 
unanswered.

\smallskip

For instance, it is not known whether $\chi(\Lambda)$ is
computable. By the compactness theorem of de Bruijn and Erd\H{os}
\cite{deBruijn-1951} we only know that the decision problem ``Is
$\chi(\Lambda) \leq k$?'' is semidecidable because $\chi(\Lambda)$ is
equal to the largest chromatic number of all finite, induced subgraphs
of $\Lambda$. This means that there is an algorithm which terminates with a positive answer after finitely many steps on every input $\Lambda$ with $\chi(\Lambda) \leq k$. On inputs $\Lambda$ with $\chi(\Lambda) > k$, the algorithm either terminates with a negative answer or it runs in an infinite loop.

\smallskip

Directly related is the question of whether one can always find a periodic coloring with $\chi(\Lambda)$ colors. If so, and
if we knew a priori a bound on the period, then this would immediately
yield an algorithm for computing $\chi(\Lambda)$.

\smallskip

Also the extremal question of maximizing $\chi(\Lambda)$ among all
$n$-dimensional lattices $\Lambda$ is widely open. In
\cite{Dutour-2021} it is shown, using a combination of the
Kabatiansky-Levenshtein upper bound and the Minkowski-Hlawka lower
bound for sphere packings, that this maximum grows exponentially for
generic lattices. In particular, there are $n$-dimensional lattices
with $\chi(\Lambda_n) \geq 2 \cdot 2^{(0.0990\ldots - o(1))n}$. By an
easy greedy argument, there is also an exponential upper bound
$\chi(\Lambda) \leq 2^n$ for every $n$-dimensional lattice
$\Lambda$. Currently, an efficient way (polynomial time in the dimension) to construct lattices with
exponential chromatic number is not known.

\smallskip

Turning to low dimensions, in \cite{Dutour-2021} the two- and
three-dimensional cases are settled completely. There are two cases in
dimension two: the square lattice and the hexagonal lattice, with 
chromatic number two and three, respectively. There are five cases in
dimension three. The Voronoi cells of these five cases are the cube,
the hexagonal prism, the rhombic dodecahedron, the elongated
dodecahedron, and the truncated octahedron. The corresponding
chromatic numbers are two, three, and four (three times).

\medskip

In this paper we completely settle the four-dimensional case.

\begin{theorem}
  \label{thm:main}
  If $\Lambda$ is a four-dimensional lattice, then its Voronoi graph is
  isomorphic to one of the 16 cases given in Table
  \ref{table:Cayley-graphs}.\footnote{After the first version of our paper was submitted to the \texttt{arXiv.org} e-print archive, Igor Baburin contacted us and reported that he arrived at the same number a few years ago.} The 16 pairwise non-isomorphic graphs
  have chromatic numbers as given in Table
  \ref{table:Cayley-graphs-computational-data}.
\end{theorem}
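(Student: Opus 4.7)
The plan is to split Theorem~\ref{thm:main} into an enumeration task (listing the 16 Voronoi graphs) and paired bounding tasks (matching lower and upper bounds on the chromatic number of each). The enumeration starts from the classical Delaunay--Engel classification of the 52 combinatorial types of four-dimensional parallelohedra. For each type one reads off a generating set $\Vor(\Lambda)$ by choosing one vector from each pair of opposite facet normals of the Voronoi cell; the resulting graph $\Cayley(\Lambda,\Vor(\Lambda))$ depends on $(\Lambda,\Vor(\Lambda))$ only modulo the $\GL_4(\Z)$ action by change of basis and the sign action $v\mapsto -v$. To collapse the 52 types to 16 isomorphism classes of Voronoi graphs, I would compute for each type a canonical form for $\Vor(\Lambda)$ under this action (for instance a lex-minimal Gram matrix paired with a lex-minimal generator list), and cross-check using invariants such as vertex degree, counts of short cycles through the origin, and the chromatic numbers of small finite quotient graphs $\Cayley(\Lambda/\Gamma,\Vor(\Lambda)\bmod\Gamma)$ for sublattices $\Gamma\subseteq\Lambda$ of small index.

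Having pinned down the 16 graphs, I would obtain the lower bounds in Table~\ref{table:Cayley-graphs-computational-data} by extracting from each Voronoi graph a finite induced subgraph $-$ the ball of graph-radius $r$ around the origin, for $r$ chosen just large enough $-$ and handing it to a SAT solver in the standard one-hot encoding to compute its chromatic number. By the de~Bruijn--Erd\H{o}s compactness theorem cited in the introduction, any such value is a valid lower bound on $\chi(\Lambda)$. For the matching upper bounds I would produce a periodic coloring by choosing a sublattice $\Gamma\subseteq\Lambda$ of finite index such that no element of $\Gamma\setminus\{0\}$ is a strict Voronoi vector of $\Lambda$; then any proper coloring of the finite Cayley graph on $\Lambda/\Gamma$ with generating set $\Vor(\Lambda)\bmod\Gamma$ lifts to a periodic proper coloring of the infinite Voronoi graph, and its existence can again be checked by SAT.

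The main obstacle I anticipate is the identification step. Abstract graph isomorphism need not respect the underlying group structure, so one cannot simply appeal to $\GL_4(\Z)$-equivalence of generating sets without risking undercounting or, conversely, overcounting if two inequivalent generating sets happen to produce abstractly isomorphic Cayley graphs. To certify that the 52 parallelohedron types yield exactly 16 Voronoi graphs, one must combine (i)~explicit $\GL_4(\Z)$ conjugations realising each identification within a class, and (ii)~graph invariants strong enough to separate the 16 resulting classes from each other. The chromatic-number computations themselves, while potentially expensive, are finite problems delegated to the SAT solver; once the enumeration is settled, producing matching lower and upper bounds is routine in principle, even if not always cheap in practice.
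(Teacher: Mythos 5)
Your proposal follows essentially the same route as the paper: reduce the 52 Delaunay/parallelohedron types to 16 Voronoi graphs via explicit $\GL_4(\Z)$ identifications of generating sets combined with graph invariants to certify non-isomorphism, then obtain lower bounds from the chromatic number of a small ball around the origin (the paper uses radius~1, the Delaunay polytope bound) and matching upper bounds from periodic colorings of quotients $\Lambda/c\Lambda$ by Voronoi-vector-free sublattices, all verified by SAT. The identification obstacle you flag is genuine but is fully handled by your two-sided certification, and the paper moreover disposes of it in principle by proving (following L\"oh) that every graph isomorphism of such Cayley graphs is affine, so that $\GL_4(\Z)$-equivalence of generating sets is not only sufficient but necessary.
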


The proof consists of two parts.

\smallskip

In Section~\ref{sec:classification} we show that, up to graph
isomorphism, there are $16$ different Voronoi graphs to consider. This
classification is a consequence of a classical result in the geometry
of numbers, essentially due to Delaunay \cite{Delaunay-1929}: In
dimension $4$, there are $52$ types of Voronoi cells. We prove that
these $52$ types only yield $16$ non-isomorphic Voronoi graphs. In
Section~\ref{sec:classification} we describe these classifications in
detail. In addition, we present an argument that shows how to
algorithmically test the isomorphism of Cayley graphs of lattices. Since
these are infinite graphs, it is not a priori clear that such an
algorithm needs to exist.

\smallskip

In Section~\ref{sec:bounds} we consider each of the $16$ cases
separately. For every case we first determine lower bounds for
$\chi(\Lambda)$ by computing the chromatic number of a finite, induced
subgraph of the Voronoi graph $\Cayley(\Lambda, \Vor(\Lambda))$. Every
natural number $d$ defines a finite, induced subgraph of the Voronoi
graph by using the vertices which are at most (graph) distance~$d$
from the origin. We denote this graph by
$\Cayley(\Lambda, \Vor(\Lambda))_d$. Then, for each case, we determine
matching upper bounds by constructing explicit periodic colorings. The
computations for the lower bounds and for the upper bounds both use
SAT solvers---algorithms to certify whether a given Boolean formula in
conjunctive normal form has a satisfying assignment or not.

The proof of Theorem~\ref{thm:main} involves numerous computations.
To ensure transparency and enable the reader to verify our
calculations, we have provided a program, written using the computer
algebra system MAGMA \cite{MAGMA}. This MAGMA program is available as
an ancillary file from the \texttt{arXiv.org} e-print archive. We
discuss the MAGMA program at the end of Section~\ref{sec:bounds}
once all the needed objects have been introduced.

\smallskip

In Section~\ref{sec:dimension-5} we collect some questions for further
research.

\section{Voronoi graphs of lattices in four dimensions}
\label{sec:classification}

In this section we show that there are, up to graph isomorphism, $16$
Voronoi graphs $\Cayley(\Lambda, \Vor(\Lambda))$ where $\Lambda$ is a
four-dimensional lattice.

\smallskip

The starting point for this classification of Voronoi graphs is a
classical classification result by Voronoi~\cite{Voronoi-1908} concerning the different Delaunay subdivisions of $4$ dimensional lattices. 

\smallskip

Note that there is another possible starting point with the classification of iso-edge domains (also called $C$-type domains). For more information on the concept of iso-edge domains we refer to Ryshkov and Baranovskii \cite{Ryshkov-1976} and to Dutour Sikiri\'c and Kummer \cite{Dutour-2022}. 
In general we now have $3$ equivalence relations regarding lattices, by identifying lattices according to either their Voronoi graph, their iso-edge domain, or their Delaunay subdivision. 
It is a priori clear that these relations become finer with Delaunay subdivisions being the finest one and Voroni graphs being the coarsest one. 
In dimension $4$ we have, up to equivalence, $51$ inequivalent iso-edge domains and $52$ inequivalent Delaunay subdivisions, so for our approach there is not much of a difference in using one of the two. 
We prefer to work with Delaunay subdivisions, as they are more common in the literature. 
Note that our classification of Voroni graphs in dimension $4$ yields that there are $16$ such, up to equivalence, so their number is strictly smaller than the number of iso-edge domains.

\smallskip

We now return to the discussion of Delaunay subdivisions.
To state Voronoi's result it is convenient to work with positive definite
quadratic forms instead of lattices.

By $\mathcal{S}^n_{++}$ we denote the open set of positive definite
quadratic forms, which we identify by positive definite matrices
through $x \mapsto Q[x] = x^{\sf T} Q x$. The dictionary between
lattices in Euclidean space and positive definite quadratic forms can
be compactly expressed by the double coset
$\Ort_n(\R) \backslash \GL_n(\R) / \GL_n(\Z)$, representing Euclidean
lattices up to orthogonal transformations and lattice basis
transformations, which is equal to $\mathcal{S}^n_{++} / \GL_n(\Z)$,
positive definite quadratic forms up to the conjugation action by
$\GL_n(\Z)$, as $\Ort_n(\R) \backslash \GL_n(\R) = \mathcal{S}^n_{++}$.

Below we define the Delaunay subdivision of a positive definite
quadratic form. This polytopal subdivision is geometrically dual to
the subdivision of $\R^n$ given by lattice translates of Voronoi
cells.

A polytope $P \subseteq \R^n$ with only integral vertices is called a
\emph{Delaunay polytope} of $Q \in \mathcal{S}^n_{++}$ if there exists
$c \in \R^n$ and $r > 0$ such that
\[
  Q[x - c] \geq r \text{ for all $x \in \Z^n$ } \; \text{ and
  } \;
  Q[x-c] = r \text{ for all vertices $x$ of $P$}.
\]
The set
\[
  \Del(Q) = \{ P : \text{$P$ is a Delaunay polytope of $Q$}\}
\]
gives the \emph{Delaunay subdivision} of $Q$. If all occurring Delaunay
polytopes are simplices we speak of a \emph{Delaunay triangulation}.

Now we shall explain the connection between Delaunay subdivisions and
Voronoi graphs of lattices. Let $\Lambda$ be a lattice with lattice
basis $B \in \GL_n(\R)$. When we take the linear image, under $B$, of
the vertex-edge graph of the Delaunay subdivision of
$Q = B^{\sf T} B$, then we exactly get the Voronoi graph
$\Cayley(\Lambda, \Vor(\Lambda))$. So classifying Delaunay
subdivisions up to the action of $\GL_n(\Z)$ provides a finer
classification than the classification of Voronoi graphs up to the
graph isomorphism.

In his second memoir \cite{Voronoi-1908}, Voronoi showed that up to
the action of $\GL_4(\Z)$ there are exactly three different Delaunay
triangulations of quaternary positive definite quadratic forms. We
recall some details of his result in
Section~\ref{sec:classification-triangulations}.

\smallskip

Every Delaunay subdivision $\mathcal{D}$ is refined by some Delaunay
triangulation $\mathcal{D}'$, meaning that all elements of
$\mathcal{D}'$ are simplices and for every simplex
$D' \in \mathcal{D}'$ there is a polytope $D \in \mathcal{D}$ so that
$D' \subseteq D$. So classifying Delaunay subdivisions amounts to
finding all possible Delaunay subdivisions which coarsen some Delaunay
triangulation. The classification of Delaunay subdivisions of
quaternary positive definite quadratic forms is due to Delaunay
\cite{Delaunay-1929}. Interestingly, he used the dual approach of
classifying four-dimensional parallelohedra.

\emph{Parallelohedra} are polytopes which tile space by lattice
translates. For dimension $4$, Delaunay showed that parallelohedra are
affine images of Voronoi cells of lattices. In general, it is not
known whether every parallelohedron arises as an affine image of a
Voronoi cell. This question is called Voronoi's conjecture.

Delaunay claimed that there are $51$ different Delaunay subdivisions
of quaternary positive definite quadratic forms. However, there are
indeed $52$ cases; Delaunay's classification was corrected and
completed by Shtogrin \cite{Shtogrin-1973}. Later Delaunay's result
was reworked, verified, and simplified by Engel \cite{Engel-1992},
Conway \cite{Conway-1997}, Deza, Grishukhin \cite{Deza-2008},
Zhilinskii \cite{Zhilinskii-2015}. We will recall some details of
Conway's classification in
Section~\ref{sec:classification-subdivisions}. More details, and a
complete derivation of Conway's classification, can be found in the
first-named author's PhD thesis \cite{Vallentin-2003}.

\smallskip

In Section~\ref{sec:classification-graphs} we will show that many of
the $52$ cases yield the same or at least isomorphic Voronoi graphs.

\subsection{Classification of Delaunay triangulations}
\label{sec:classification-triangulations}

Here we recall some details of Voronoi's classification of Delaunay
triangulations of quaternary positive definite quadratic forms. We
refer to the book by Sch\"urmann \cite{Schuermann-2009} for a
contemporary exposition of Voronoi's theory.

Let $\mathcal{D}$ be the Delaunay subdivision of some positive
definite quadratic form $Q$, then
\[
  \Delta(\mathcal{D}) = \{Q' \in \mathcal{S}^n_{++} : \mathcal{D} = \Del(Q')\}
\]
is called the \emph{secondary cone} of $\mathcal{D}$. Its topological
closure $\overline{\Delta(\mathcal{D})}$ is a polyhedral cone in the
convex cone of positive semidefinite matrices $\mathcal{S}^n_+$. The
Delaunay subdivision is a triangulation if and only if its secondary
cone is full-dimensional. A Delaunay subdivision $\mathcal{D}'$
refines another subdivision $\mathcal{D}$ if and only if
$\overline{\Delta(\mathcal{D'})} \subseteq
\overline{\Delta(\mathcal{D})}$. The group $\GL_n(\Z)$ acts on the
space of symmetric matrices by conjugation. Under this group action
there are only finitely many orbits of secondary cones.

At the end of his second memoir, Voronoi \cite{Voronoi-1908} computed
the orbits of full-dimensional secondary cones for $n \leq 4$. For
$n \leq 3$ there is only one orbit and for $n = 4$ there are exactly three
orbits. It turns out that in these cases, all full-dimensional secondary cones are simplicial. This is no longer the case for $n \geq 5$. For instance, in \cite{Dutour-2005}, for $n = 6$, a full-dimensional secondary cone (of dimension $\binom{n(n+1)}{2} = 21$) having $7145429$ many extreme rays was found.

Using Voronoi's notation, the three polyhedral domains ${\it\Delta}$,
${\it\Delta'}$, ${\it\Delta''}$ are given by the extremal rays:
\[
\begin{array}{c}
{\it \Delta} = \cone\{R_1, \ldots, R_{10}\},\\
{\it \Delta'} = \cone\{R_1, \ldots, R_4, R_6, \ldots, R_{11}\},\\
 {\it \Delta''} = \cone\{R_1, \ldots, R_4, R_6, \ldots, R_9,R_{11},
  R_{12}\},
\end{array}
\]
where
{\footnotesize
\[
R_1 =
\begin{pmatrix}
1 & 0 & 0 & 0\\
0 & 0 & 0 & 0\\
0 & 0 & 0 & 0\\
0 & 0 & 0 & 0
\end{pmatrix},\;
R_2 = 
\begin{pmatrix}
0 & 0 & 0 & 0\\
0 & 1 & 0 & 0\\
0 & 0 & 0 & 0\\
0 & 0 & 0 & 0
\end{pmatrix},
R_3 = 
\begin{pmatrix}
0 & 0 & 0 & 0\\
0 & 0 & 0 & 0\\
0 & 0 & 1 & 0\\
0 & 0 & 0 & 0
\end{pmatrix},
\]

\[
R_4 =
\begin{pmatrix}
0 & 0 & 0 & 0\\
0 & 0 & 0 & 0\\
0 & 0 & 0 & 0\\
0 & 0 & 0 & 1
\end{pmatrix},\;
R_5 =
\begin{pmatrix}
1 & -1 & 0 & 0\\
-1 & 1 & 0 & 0\\
0 & 0 & 0 & 0\\
0 & 0 & 0 & 0
\end{pmatrix},\;
R_6 =
\begin{pmatrix}
1 & 0 & -1 & 0\\
0 & 0 & 0 & 0\\
-1 & 0 & 1 & 0\\
0 & 0 & 0 & 0
\end{pmatrix},
\]

\[
R_7 = 
\begin{pmatrix}
1 & 0 & 0 & -1\\
0 & 0 & 0 & 0\\
0 & 0 & 0 & 0\\
-1 & 0 & 0 & 1
\end{pmatrix},\;
R_8 = 
\begin{pmatrix}
0 & 0 & 0 & 0\\
0 & 1 & -1 & 0\\
0 & -1 & 1 & 0\\
0 & 0 & 0 & 0
\end{pmatrix},\;
R_9 = 
\begin{pmatrix}
0 & 0 & 0 & 0\\
0 & 1 & 0 & -1\\
0 &  0 & 0 & 0\\
0 & -1 & 0 & 1
\end{pmatrix},
\]

\[
R_{10} = 
\begin{pmatrix}
0 & 0 & 0 & 0\\
0 & 0 & 0 & 0\\
0 & 0 & 1 & -1\\
0 & 0 & -1 & 1
\end{pmatrix},\;
R_{11} =  
\begin{pmatrix}
 4 & 2 & -2 & -2\\
 2 & 4 & -2 & -2\\
-2 & -2 & 4 & 0\\
-2 & -2 & 0 & 4
\end{pmatrix},\;
R_{12}  =  
\begin{pmatrix}
 1 & 1 & -1 & -1\\
 1 & 1 & -1 & -1\\
-1 & -1 & 1 & 1\\
-1 & -1 & 1 & 1
\end{pmatrix}.
\]
}

In our notation this corresponds to
\[
{\it \Delta} = \overline{\Delta(\mathcal{D})}, \;
{\it \Delta'} = \overline{\Delta(\mathcal{D}')}, \;
{\it \Delta''} = \overline{\Delta(\mathcal{D}'')},
\]
where $\mathcal{D}$, $\mathcal{D}'$, $\mathcal{D}''$ are
non-equivalent Delaunay triangulations. In
Figure~\ref{fig:secondary-cone-tessellation} we schematically show how
the rational closure of the positive definite quadratic forms is
tessellated by orbits of the polyhedral domains ${\it \Delta}$,
${\it \Delta'}$, ${\it \Delta''}$.
\begin{figure}[htb]
\begin{center}
\includegraphics[width=10cm]{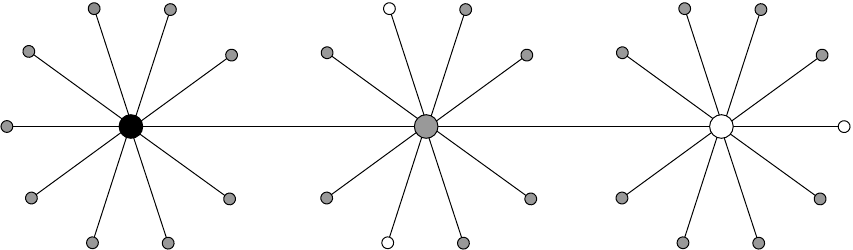}
\end{center}
\caption{A finite part of the tessellation of the rational closure of the positive
  definite quadratic forms into orbits of the polyhedral domains
  ${\it \Delta}$, ${\it \Delta'}$, ${\it \Delta''}$. Black nodes
  correspond to the orbit of ${\it \Delta}$, grey nodes to the orbit
  of ${\it \Delta'}$, and white nodes to the orbit of
  ${\it \Delta''}$. We draw an edge between two nodes whenever the
  polyhedral domains share a facet, here a face of dimension $9$.}
\label{fig:secondary-cone-tessellation}
\end{figure}

\subsection{Classification of Delaunay subdivisions}
\label{sec:classification-subdivisions}

There are $52$ different Delaunay subdivisions of quaternary positive
definite quadratic forms. This classification can be derived from
Voronoi's result. To accomplish this, one considers all faces of the
three simplicial cones ${\it \Delta}$, ${\it \Delta'}$,
${\it \Delta''}$ which gives $\leq 3072$ candidates, represented by
$\sum_{i=1}^{12} \alpha_i R_i$, with $\alpha_i \in \{0,1\}$. Then, one
checks which of these faces contain positive definite quadratic forms
in their relative interiors and which ones are equivalent under the
action of $\GL_4(\Z)$.

In Table~\ref{table:zonotopal-cases} and
Table~\ref{table:nonzonotopal-cases}, we give the classification
symbols of Conway. For every representative positive definite
quadratic form, we require the strict Voronoi vectors, which we list
in Table~\ref{table:strict-Voronoi-vectors}. For more details on this
classification and an explanation of the symbols used we refer to \cite{Conway-1997} and
\cite{Vallentin-2003}.

\begin{table}[htb]
\[
  \begin{split}
&    v_1 =  \pm (0, 0, 0, 1), \;
    v_2 =  \pm (0, 0, 1, -1), \;
    v_3 =  \pm (0, 0, 1, 0), \;
    v_4 =  \pm  (0, 0, 1, 1),\\
&    v_5 =  \pm (0, 1, 0, 0), \;
    v_6 = \pm ( 0, 1, 0, 1 ), \;
    v_7 = \pm ( 0, 1, 1, 0 ), \;
    v_8 = \pm (  0, 1, 1, 1 ),\\
&    v_9 = \pm ( 1, -1, 0, 0 ), \;
    v_{10} = \pm ( 1, 0, 0, 0 ), \;
    v_{11} = \pm ( 1, 0, 0, 1 ), \;
    v_{12} = \pm ( 1, 0, 1, 0 ),\\
&    v_{13} = \pm ( 1, 0, 1, 1 ), \;
    v_{14} = \pm ( 1, 1, 0, 0 ), \;
    v_{15} = \pm ( 1, 1, 0, 1 ), \;
    v_{16} = \pm ( 1, 1, 1, 0 ),\\
&    v_{17} = \pm ( 1, 1, 1, 1 )
    \end{split}
  \]
\caption{The complete list of all possible strict Voronoi vectors of
  positive definite quadratic forms in ${\it \Delta}$, ${\it
    \Delta'}$, ${\it \Delta''}$.}
\label{table:strict-Voronoi-vectors}
\end{table}

\begin{table}[htb]
  \begin{tabular}{lll}
  \hline
  lattice & dimension & representative\\
          & number & strict Voronoi vectors\\
    \hline
$K_5$ & $10$ & $R_1 + R_2 + R_3 + R_4 + R_5 + R_6 + R_7 + R_8 + R_9 +
        R_{10}$\\
           & $30$ & $v_{1}, v_{3}, v_{4}, v_{5}, v_{6}, v_{7}, v_{8},
               v_{10}, v_{11}, v_{12}, v_{13}, v_{14}, v_{15}, v_{16},
               v_{17}$ \\
    \hline
$ K_{3,3}$ & $9$ & $R_1 + R_2 + R_3 + R_4 + R_6 + R_7 + R_8 + R_9 +  R_{12}$ \\
          & $30$ & $v_{1}, v_{2}, v_{3}, v_{5}, v_{6}, v_{7}, v_{8}, v_{9}, v_{10}, v_{11}, v_{12}, v_{13}, v_{15}, v_{16}, v_{17}$\\
$K_5 - 1$ & $9$& $R_1 + R_2 + R_3 + R_4 + R_5 + R_7 + R_8 + R_9 + R_{10}$\\
          & $28$ & $v_{1}, v_{3}, v_{4}, v_{5}, v_{6}, v_{7}, v_{8}, v_{10}, v_{11}, v_{13}, v_{14}, v_{15}, v_{16}, v_{17}$
\\
    \hline
$K_5-2$  & $8$ & $R_1 + R_2 + R_3 + R_4 + R_7 + R_8 + R_9 + R_{10}$\\
         & $24$ & $v_{1}, v_{3}, v_{4}, v_{5}, v_{6}, v_{7}, v_{8}, v_{10}, v_{11}, v_{13}, v_{15}, v_{17}$\\
$K_5-1-1$ & $8$ & $R_1 + R_2 + R_3 + R_4 + R_5 + R_7 + R_8 + R_{10}$\\
         &  $26$ & $v_{1}, v_{3}, v_{4}, v_{5}, v_{7}, v_{8}, v_{10}, v_{11}, v_{13}, v_{14}, v_{15}, v_{16}, v_{17}$
\\
    \hline
    $K_5-3$  & $7$ & $R_1 + R_2 + R_4 + R_7 + R_8 + R_9 + R_{10}$\\
         & $20$ & $v_{1}, v_{3}, v_{4}, v_{5}, v_{7}, v_{8}, v_{10}, v_{11}, v_{13}, v_{17}$\\
    $K_5-2-1$ & $7$ & $R_1 + R_2 + R_4 + R_5 + R_7 + R_8 + R_{10}$\\
         & $24$ & $v_{1}, v_{3}, v_{4}, v_{5}, v_{7}, v_{8}, v_{10}, v_{11}, v_{13}, v_{14}, v_{16}, v_{17}$\\
    $K_4+1$  & $7$ & $R_1 + R_2 + R_3 + R_4 + R_8 + R_9 + R_{10}$\\
         & $16$ & $v_{1}, v_{3}, v_{4}, v_{5}, v_{6}, v_{7}, v_{8}, v_{10}$\\
    $C_{2221}$ & $7$ & $R_1 + R_2 + R_3 + R_4 + R_7 + R_9 + R_{10}$\\
         & $22$ & $v_{1}, v_{3}, v_{4}, v_{5}, v_{6}, v_{8}, v_{10}, v_{11}, v_{13}, v_{15}, v_{17}$
\\
\hline
    $C_{221} + 1$ &  $6$ & $R_1 + R_2 + R_3 + R_4 + R_8 + R_{10}$\\
         & $14$ & $v_{1}, v_{3}, v_{4}, v_{5}, v_{7}, v_{8}, v_{10}$\\
    $C_{321}$  & $6$ & $R_1 + R_2 + R_4 + R_7 + R_8 + R_{10}$\\
         & $20$ & $v_{1}, v_{3}, v_{4}, v_{5}, v_{7}, v_{8}, v_{10}, v_{11}, v_{13}, v_{17}$\\    
    $C_{222}$ & $6$ & $R_1+ R_2 + R_3 + R_7 + R_9 + R_{10}$\\
         & $22$ & $v_{1}, v_{3}, v_{4}, v_{5}, v_{6}, v_{8}, v_{10}, v_{11}, v_{13}, v_{15}, v_{17}$\\
    $C_3+C_3$  & $6$ & $R_1 + R_4 + R_7 + R_8 + R_9 + R_{10}$\\
         & $12$ & $v_{3}, v_{5}, v_{7}, v_{8}, v_{10}, v_{17}$\\
\hline
    $C_5$  & $5$ & $R_1 + R_2 + R_7 + R_8 + R_{10}$\\
         & $20$ & $v_{1}, v_{3}, v_{4}, v_{5}, v_{7}, v_{8}, v_{10}, v_{11}, v_{13}, v_{17}$\\
    $C_4+1$ & $5$ & $R_1 + R_2 + R_4 + R_8 + R_{10}$\\
         & $14$ & $v_{1}, v_{3}, v_{4}, v_{5}, v_{7}, v_{8}, v_{10}$\\
    $C_3+1+1$  & $5$ & $R_1 + R_2 + R_3 + R_4 + R_8$\\
        & $10$ & $v_{1}, v_{3}, v_{5}, v_{7}, v_{10}$\\
\hline
    $1+1+1+1$ & $4$ & $R_1 + R_2 + R_3 + R_4$\\
         & $8$ & $v_{1}, v_{3}, v_{5}, v_{10}$\\
\hline
  \end{tabular}

\smallskip

\caption{Representatives of the $17$ different four-dimensional
  Delaunay subdivisions where the corresponding Voronoi cell is a
  zonotope, that is, a linear projection of a cube. In Conway's
  classification the graph $K_4$ occurs, which is wrong and should be
  replaced by $C_{221} + 1$; this mistake was found by Deza and
  Grishukhin in \cite{Deza-2008}. The entry ``dimension'' gives the
  dimension of the secondary cone of the corresponding Delaunay
  subdivision. The entry ``number'' gives the number of strict Voronoi
  vectors.}

\label{table:zonotopal-cases}
\end{table}

\begin{table}[htb]
  \begin{tabular}{lll}
  \hline
  lattice & dimension & representative\\
          & number & strict Voronoi vectors\\ \hline   
  $111+$ & $10$ & $R_1 + R_2 + R_3 + R_4 + R_6 + R_7 + R_8 + R_9 + R_{11} + R_{12}$\\
  & $30$ & $v_{1}, v_{2}, v_{3}, v_{5}, v_{6}, v_{7}, v_{8}, v_{9}, v_{10},
    v_{11}, v_{12}, v_{13}, v_{15}, v_{16}, v_{17}$ \\
 $111-$ & $10$ & $R_1 + R_2 + R_3 + R_4 + R_6  + R_7 + R_8 + R_9 + R_{10} +
          R_{11}$\\
  & $30$ & $v_{1}, v_{3}, v_{4}, v_{5}, v_{6}, v_{7}, v_{8}, v_{9}, v_{10}, v_{11}, v_{12}, v_{13}, v_{15}, v_{16}, v_{17}$\\
  \hline
  $211+$ & $9$ & $R_1 + R_2 + R_3 + R_4 + R_6 + R_8 + R_9 + R_{11} +
           R_{12}$ \\
  & $28$ & $v_{1}, v_{2}, v_{3}, v_{5}, v_{6}, v_{7}, v_{8}, v_{9}, v_{10}, v_{11}, v_{12}, v_{13}, v_{16}, v_{17}$\\
  $211-$ & $9$ & $R_1 + R_2 + R_3 + R_4 + R_6  +  R_8 + R_9 + R_{10} +
           R_{11}$ \\
  & $28$ & $v_{1}, v_{3}, v_{4}, v_{5}, v_{6}, v_{7}, v_{8}, v_{9}, v_{10}, v_{11}, v_{12}, v_{13}, v_{16}, v_{17}$
\\
  \hline
  $311+$ & $8$ & $R_1 + R_2 + R_3 + R_4 + R_6 + R_8 + R_{11} + R_{12}$\\
  & $28$ & $v_{1}, v_{2}, v_{3}, v_{5}, v_{6}, v_{7}, v_{8}, v_{9}, v_{10}, v_{11}, v_{12}, v_{13}, v_{16}, v_{17}$
\\
  $311-$ & $8$ & $R_1 + R_2 + R_3 + R_4 + R_6  +  R_8 + R_{10} + R_{11}$\\
  & $28$ & $v_{1}, v_{3}, v_{4}, v_{5}, v_{6}, v_{7}, v_{8}, v_{9}, v_{10}, v_{11}, v_{12}, v_{13}, v_{16}, v_{17}$\\
  $221+$ & $8$ & $R_1 + R_2 + R_3 + R_4 + R_8 + R_9 + R_{11} + R_{12}$\\
    & $26$ & $v_{1}, v_{2}, v_{3}, v_{5}, v_{6}, v_{7}, v_{8}, v_{9}, v_{10}, v_{11}, v_{12}, v_{13}, v_{17}$\\
  $221-$ & $8$ & $R_1 + R_2 + R_3 + R_4 + R_8 + R_9 + R_{10} + R_{11}$\\
    & $26$ & $v_{1}, v_{3}, v_{4}, v_{5}, v_{6}, v_{7}, v_{8}, v_{9}, v_{10}, v_{11}, v_{12}, v_{13}, v_{17}$\\
  $22'1$ & $8$ & $R_1 + R_2 + R_3 + R_4 + R_7 + R_8  + R_{11} + R_{12}$\\
    & $26$ & $v_{1}, v_{2}, v_{3}, v_{5}, v_{6}, v_{7}, v_{8}, v_{9}, v_{10}, v_{11}, v_{12}, v_{13}, v_{17},
$\\
  \hline
  $411$ &  $7$ & $R_1+ R_2 + R_4 + R_6 + R_8 + R_{10} + R_{11}$\\
& $28$ & $v_{1}, v_{3}, v_{4}, v_{5}, v_{6}, v_{7}, v_{8}, v_{9}, v_{10}, v_{11}, v_{12}, v_{13}, v_{16}, v_{17}$\\
  $321+$ & $7$ & $R_1 + R_2 + R_4 + R_7 + R_8 + R_{10} + R_{11}$\\
  & $26$ & $v_{1}, v_{3}, v_{4}, v_{5}, v_{6}, v_{7}, v_{8}, v_{9}, v_{10}, v_{11}, v_{12}, v_{13}, v_{17}$\\
  $321-$ & $7$ & $R_1 + R_2 + R_3 + R_4 + R_8 + R_{10} + R_{11}$\\
  & $26$ & $v_{1}, v_{3}, v_{4}, v_{5}, v_{6}, v_{7}, v_{8}, v_{9}, v_{10}, v_{11}, v_{12}, v_{13}, v_{17}$\\
  $222+$ & $7$ & $R_1 + R_3 + R_4 + R_8 + R_9 + R_{11} + R_{12}$\\
  & $24$ & $v_{1}, v_{3}, v_{5}, v_{6}, v_{7}, v_{8}, v_{9}, v_{10}, v_{11}, v_{12}, v_{13}, v_{17}$\\
  $222-$ & $7$ & $R_1 + R_3 + R_4 + R_6  + R_7 + R_{10} + R_{11}$\\
  & $24$ & $v_{1}, v_{3}, v_{5}, v_{6}, v_{7}, v_{8}, v_{9}, v_{10}, v_{11}, v_{12}, v_{13}, v_{17}$\\
  $222'$ & $7$ & $R_1 + R_3 + R_4 + R_8 + R_9 + R_{10} + R_{11}$\\
  & $24$ & $v_{1}, v_{3}, v_{5}, v_{6}, v_{7}, v_{8}, v_{9}, v_{10}, v_{11}, v_{12}, v_{13}, v_{17}$\\
    $22'2''$ & $7$  & $R_1 + R_4 + R_7 + R_8 + R_9 + R_{10} + R_{11}$\\
    & $24$ & $v_{1}, v_{3}, v_{5}, v_{6}, v_{7}, v_{8}, v_{9}, v_{10}, v_{11}, v_{12}, v_{13}, v_{17}$\\
  \hline
\end{tabular}

\smallskip

\caption{(First part) Representatives of the $35$ different four-dimensional Delaunay
  subdivisions where the corresponding Voronoi cell is not a zonotope.}

\label{table:nonzonotopal-cases}

\end{table}

\addtocounter{table}{-1}

 \begin{table}[htb]
  \begin{tabular}{lll}
  \hline
  lattice & dimension & representative\\
          & number & strict Voronoi vectors\\ \hline   
  $421$  & $6$ & $R_1 + R_2 + R_4 + R_8 + R_{10} + R_{11}$\\
    & $26$ & $v_{1}, v_{3}, v_{4}, v_{5}, v_{6}, v_{7}, v_{8}, v_{9}, v_{10}, v_{11}, v_{12}, v_{13}, v_{17}$\\
  $331+$ & $6$ & $R_1 + R_2 + R_3 + R_4 + R_{11} + R_{12}$\\
    & $26$ & $v_{1}, v_{2}, v_{3}, v_{5}, v_{6}, v_{7}, v_{8}, v_{9}, v_{10}, v_{11}, v_{12}, v_{13}, v_{17}$\\
  $331-$ & $6$ & $R_1+ R_2 + R_3 + R_4 + R_{10} + R_{11}$\\
    & $26$ & $v_{1}, v_{3}, v_{4}, v_{5}, v_{6}, v_{7}, v_{8}, v_{9}, v_{10}, v_{11}, v_{12}, v_{13}, v_{17},
$\\
  $322+$ & $6$ & $R_1 + R_3 + R_4 + R_8 + R_{11} + R_{12}$\\
    &  $24$ & $v_{1}, v_{3}, v_{5}, v_{6}, v_{7}, v_{8}, v_{9}, v_{10}, v_{11}, v_{12}, v_{13}, v_{17}$\\
  $322-$ & $6$ & $R_1 + R_4 + R_7 + R_8 + R_{11} + R_{12}$\\
    & $24$ & $v_{1}, v_{3}, v_{5}, v_{6}, v_{7}, v_{8}, v_{9}, v_{10}, v_{11}, v_{12}, v_{13}, v_{17}$\\
  $322'$ & $6$ & $R_1 + R_2 + R_4 + R_6 + R_7 + R_{11}$\\
  & $24$ & $v_{1}, v_{3}, v_{5}, v_{6}, v_{7}, v_{8}, v_{9}, v_{10}, v_{11}, v_{12}, v_{13}, v_{17}$\\
\hline
  $431$ & $5$ & $R_1 + R_2 + R_4 + R_{10} + R_{11}$\\
    & $26$ & $v_{1}, v_{3}, v_{4}, v_{5}, v_{6}, v_{7}, v_{8}, v_{9}, v_{10}, v_{11}, v_{12}, v_{13}, v_{17}$\\
  $422$ & $5$ & $R_1 + R_4 + R_8 + R_{11} + R_{12}$\\
    & $24$ & $v_{1}, v_{3}, v_{5}, v_{6}, v_{7}, v_{8}, v_{9}, v_{10}, v_{11}, v_{12}, v_{13}, v_{17}$\\
  $422'$ & $5$ & $R_1 + R_4 + R_8 + R_{10} + R_{11}$\\
    & $24$ & $v_{1}, v_{3}, v_{5}, v_{6}, v_{7}, v_{8}, v_{9}, v_{10}, v_{11}, v_{12}, v_{13}, v_{17}$\\
  $332+$ & $5$ & $R_1 + R_3 + R_4 + R_{11} + R_{12}$\\
    & $24$ & $v_{1}, v_{3}, v_{5}, v_{6}, v_{7}, v_{8}, v_{9}, v_{10}, v_{11}, v_{12}, v_{13}, v_{17}$\\
  $332-$ & $5$ & $R_1 + R_3 + R_4 +R_{10} + R_{11}$\\
    & $24$ & $v_{1}, v_{3}, v_{5}, v_{6}, v_{7}, v_{8}, v_{9}, v_{10}, v_{11}, v_{12}, v_{13}, v_{17}$\\
  \hline
  $441$ & $5$ &  $R_1 + R_2 + R_{10} + R_{11}$ \\
      & $26$ & $v_{1}, v_{3}, v_{4}, v_{5}, v_{6}, v_{7}, v_{8}, v_{9}, v_{10}, v_{11}, v_{12}, v_{13}, v_{17}$\\
  $432$ & $4$ & $R_1 + R_4 + R_{10} + R_{11}$\\
      & $24$ & $v_{1}, v_{3}, v_{5}, v_{6}, v_{7}, v_{8}, v_{9}, v_{10}, v_{11}, v_{12}, v_{13}, v_{17}$\\
  $333+$ & $4$ & $R_3 + R_4 + R_{11} + R_{12}$\\
      & $24$ & $v_{1}, v_{3}, v_{5}, v_{6}, v_{7}, v_{8}, v_{9}, v_{10}, v_{11}, v_{12}, v_{13}, v_{17}$\\
  $333-$ & $4$ & $R_3 + R_4 + R_{10} + R_{11}$\\
   & $24$ & $v_{1}, v_{3}, v_{5}, v_{6}, v_{7}, v_{8}, v_{9}, v_{10}, v_{11}, v_{12}, v_{13}, v_{17}$\\
  \hline
  $442$  &  $3$ & $R_1 + R_{10} + R_{11}$\\
     & $24$ & $v_{1}, v_{3}, v_{5}, v_{6}, v_{7}, v_{8}, v_{9}, v_{10}, v_{11}, v_{12}, v_{13}, v_{17}$\\
  $433$ &  $3$ & $R_4 + R_{10} + R_{11}$\\
     & $24$ & $v_{1}, v_{3}, v_{5}, v_{6}, v_{7}, v_{8}, v_{9}, v_{10}, v_{11}, v_{12}, v_{13}, v_{17}$\\
  \hline
  $443$ & $2$ & $R_{10} + R_{11}$\\
     & $24$ & $v_{1}, v_{3}, v_{5}, v_{6}, v_{7}, v_{8}, v_{9}, v_{10}, v_{11}, v_{12}, v_{13}, v_{17}$
\\
  \hline
  $444$ & $1$ & $R_{11}$\\
     & $24$ & $v_{1}, v_{3}, v_{5}, v_{6}, v_{7}, v_{8}, v_{9}, v_{10}, v_{11}, v_{12}, v_{13}, v_{17}$\\
  \hline
  \end{tabular}

  \smallskip
  
\caption{(Second part) Representatives of the $35$ different four-dimensional Delaunay
  subdivisions where the corresponding Voronoi cell is not a
  zonotope.}

\end{table}

\subsection{Classification of Voronoi graphs}
\label{sec:classification-graphs}

In this section, we derive a classification of Voronoi graphs for
four-dimensional lattices based on the classification of Delaunay
subdivisions. We have to verify which of the above $52$ Voronoi graphs
are isomorphic.

\smallskip

In many cases, the strict Voronoi vectors coincide, so that also the
Voronoi graphs coincide.  In three cases, for example, for the pairs
$(411, 311+)$, $(441, 331+)$, $(K_5-2, K_5 - 2 -1)$ the strict Voronoi
vectors do not coincide, but the Voronoi graphs are still
isomorphic. In the first two cases one can find an isomorphism by
determining an element $A \in \GL_4(\Z)$ of the automorphism group of
$444$ (which corresponds to the root lattice $D_4$ and which has order
$1152$) so that the Voronoi vectors of $411$ are mapped to the ones of
$311+$, respectively of $441$ to $331+$. This linear isomorphism
between the generators of the Cayley graph induces a global graph
isomorphism. In the third case, one can use an element of the
automorphism group of $K_5$ (which corresponds to the weight lattice
$A^*_4$ and which has order $240$).

\smallskip

Then we are left with $16$ candidates of pairwise non-isomorphic
Voronoi graphs. They are indeed pairwise non-isomorphic, which one can
see by looking at invariants. The first invariant we are using is the
regularity $r$ of the graphs. The second and third invariant we are
using are coming from the finite, induced subgraph
$\Cayley(\Lambda, \Vor(\Lambda))_1$ whose vertices are at most unit
distance from the origin. The second invariant is the number of edges
$|E|$ of $\Cayley(\Lambda, \Vor(\Lambda))_1$ and the third invariant
is the order of the (graph) automorphism group of
$\Cayley(\Lambda, \Vor(\Lambda))_1$.

\begin{table}[htb]
\begin{tabular}{lllllll}
  \hline
 \# & graph & $r$ & $|E|$ & order & number & representatives\\
  \hline
  1 & $V_{30}^z$ & $30$ & $180$ & $240$ & $1$ & $K_5$\\
  2 &  $V_{30}^{z,n}$ & $30$ & $186$ & $144$ & $2$ & $K_{3,3}$, $111+$\\
  3 &  $V_{30}^n$ & $30$ & $180$ & $24$ & $1$ & $111-$ \\
  \hline
  4 &  $V_{28}^z$ & $28$ & $154$ & $24$ & $1$ & $K_5 - 1$\\
  5 &  $V_{28}^n$ & $28$ & $160$ & $24$ & $5$ & $211+$, $211-$,
                                                $311+$, $311-$,
                                                $411$\\
    \hline
  6 &  $V_{26}^z$ & $26$ & $134$ & $16$ & $1$ & $K_5 - 1 - 1$\\
  7 &  $V_{26}^n$ & $26$ & $140$ & $96$ & $10$ & $221+$, $221-$, $22'1$, $321+$,
                                 $321-$,\\
         & & & &  & & $421$, $331+$, $331-$, $431$,  $441$\\
    \hline
  8 &  $V_{24}^z$ & $24$ & $114$ & $16$ & $2$ & $K_5-2$, $K_5-2-1$ \\
  9 &  $V_{24}^n$ & $24$ & $120$ & $1152$ & $18$ & $222+$, $222-$, $222'$, $22'2''$,
                                  $322+$,\\
         & & & &  & & $322-$, $322'$, $422$, $422'$, $332+$,\\
         & & & & & & $332-$, $432$, $333+$, $333-$, $442$, \\
         & & & & & & $433$, $443$, $444$ \\
    \hline
  10 &  $V_{22}^z$ & $22$ & $94$ & $96$ & $2$ & $C_{2221}$, $C_{222}$\\
  11 &  $V_{20}^z$ & $20$ & $80$ & $240$ & $3$ & $K_5 - 3$, $C_{321}$, $C_5$ \\
  12 &  $V_{16}^z$ & $16$ & $52$ & $96$ & $1$ & $K_4+1$ \\
  13 &  $V_{14}^z$ & $14$ & $38$ & $96$ & $2$ & $C_{221} + 1$, $C_4 + 1$ \\
  14 &  $V_{12}^z$ & $12$ & $24$ & $288$ & $1$ & $C_3+C_3$ \\
  15 &  $V_{10}^z$ & $10$ & $16$ & $288$ & $1$ & $C_3+1+1$\\
  16 &  $V_{8}^z$ & $8$ & $8$ & $40320$ & $1$ & $1+1+1+1$\\
  \hline
\end{tabular}
  
\smallskip

\caption{Classification of Voronoi graphs of four-dimensional
  lattices. The column $r$ gives the regularity of the Voronoi graph,
  the column $|E|$ gives the number of edges, the column ``order'' gives the
  order of the automorphism group of the induced subgraph
  $\Cayley(\Lambda, \Vor(\Lambda))_1$.}

\label{table:Cayley-graphs}

\end{table}

Here we were able to test whether two Voronoi graphs are isomorphic in a rather ad hoc way. 
This can also be done in a systematic way: 
In the next section we provide a finite algorithm that can be used in general, for this we will briefly discuss how to test
for isomorphism of general Cayley graphs associated with lattices with
finite sets of generators; see also the paper \cite{Baburin-2020} by Baburin 
which summarizes computational approaches to finding isomorphisms and automorphisms of Cayley graphs of lattices and contains references to earlier works in this field.

\subsection{Isomorphisms of Cayley graphs of lattices}

Suppose $\Lambda$ and $\Lambda'$ are lattices, and
$S \subseteq \Lambda$ and $S' \subseteq \Lambda'$ are finite, centrally
symmetric generating sets. That is, $S = -S$ and
$\Lambda = \langle S \rangle_{\mathbb{Z}}$, and the corresponding
identities hold for $\Lambda'$ and $S'$.

Any linear isomorphism between $\Lambda$ and $\Lambda'$ that maps $S$
to $S'$ induces a graph isomorphism between $\Cayley(\Lambda,S)$ and
$\Cayley(\Lambda',S')$.  A general result on isomorphisms of Cayley
graphs of finitely generated abelian groups (see L\"oh
\cite{Loeh-2013}) implies that the converse is also true: Any graph
isomorphism
$\varphi : \Cayley(\Lambda,S) \rightarrow \Cayley(\Lambda',S')$
induces a linear isomorphism of $\Lambda$ and $\Lambda'$ which sends $S$ to $S'$.

Since we think this result is of independent interest in the context
of lattices, and the elegant proof of the general result can be
significantly shortened in this context, we reproduce a shortened
version of the proof given in L\"oh \cite{Loeh-2013} here.

The main idea of the proof is to use induction on the size of the
generating set $S$, the crucial observation is that this induction
should be done by reducing the size of $S$ by removing an
$\|\cdot\|_2$-maximal element from $S$.

So, suppose $\Lambda$ is a lattice with a centrally symmetric generating
set $S$. Given $s \in S$ we write $S_s = S \setminus \{s,-s\}$ and
$\Lambda_s = \langle S_s \rangle_{\mathbb{Z}}$.

We rest the induction step on the following lemma.

\begin{lemma}
  \label{lem:cayley:reduction}
  Let $s \in S$ be $\|\cdot\|_2$-maximal and let
  $\varphi : \Cayley(\Lambda,S) \rightarrow \Cayley(\Lambda',S')$ be a graph
  isomorphism. Then the restriction $\varphi_s$ of $\varphi$ to
  $\Cayley(\Lambda_s,S_s)$ induces a graph isomorphism
  $\Cayley(\Lambda_s,S_s) \rightarrow \Cayley(\Lambda'_{s'},S'_{s'})$, where
  $s' = \varphi(s)$.
\end{lemma}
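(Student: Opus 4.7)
By pre-composing $\varphi$ with a lattice translation in $\Lambda'$ (which is a graph automorphism of $\Cayley(\Lambda',S')$), I may assume without loss of generality that $\varphi(0)=0$. The graph isomorphism then restricts to a bijection between the unit neighborhoods of $0$, giving $\varphi(S)=S'$; in particular $s'=\varphi(s)\in S'$.

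The crux of the argument is to show that $\varphi$ sends the $\pm s$-edges of $\Cayley(\Lambda,S)$ onto the $\pm s'$-edges of $\Cayley(\Lambda',S')$. To establish this, one needs a purely graph-theoretic invariant that singles out the $\pm s$-edges among all edge-classes of $\Cayley(\Lambda,S)$, since graph isomorphisms automatically preserve any such invariant. Such an invariant is made available by the $\|\cdot\|_2$-maximality of $s$: any expression of $s$ as a sum of elements of $S$ corresponds to a walk from $0$ to $s$, and the parallelogram identity applied to $\|s\|_2$ restricts the geometry of these decompositions enough to make $\pm s$-edges combinatorially extremal (for instance, by forcing characteristic counts of short cycles through such an edge, or of common neighbors of its endpoints). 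Producing and verifying the precise combinatorial characterization is the main obstacle of the proof, as it requires converting metric information about the $\ell_2$-norm into a purely graph-theoretic property visible to $\varphi$.

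Once the matching of edge-classes is in hand, the remainder is formal. The sublattice $\Lambda_s=\langle S_s\rangle_{\mathbb{Z}}$ is precisely the connected component of $0$ in the subgraph of $\Cayley(\Lambda,S)$ obtained by deleting all $\pm s$-edges, since every element of $\Lambda_s$ can be written as an $S_s$-sum and is therefore reached from $0$ by a walk avoiding $\pm s$-edges, while conversely every vertex reachable by such a walk lies in $\langle S_s\rangle_{\mathbb{Z}}$. The analogous description holds for $\Lambda'_{s'}$ in $\Cayley(\Lambda',S')$. Because $\varphi$ bijects the two classes of deleted edges, it carries this component onto $\Lambda'_{s'}$, so that $\varphi(\Lambda_s)=\Lambda'_{s'}$. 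Finally, adjacency in $\Cayley(\Lambda_s,S_s)$ is exactly adjacency in $\Cayley(\Lambda,S)$ restricted to $\Lambda_s$ along non-$\pm s$-edges, and by the edge-matching this is equivalent to adjacency in $\Cayley(\Lambda'_{s'},S'_{s'})$. Hence $\varphi_s=\varphi|_{\Lambda_s}$ is the desired graph isomorphism.
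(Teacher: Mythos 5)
Your overall architecture is right and matches the paper's in spirit: normalize so $\varphi(0)=0$, show that $\varphi$ carries the class of $\pm s$-edges onto the class of $\pm s'$-edges, and then identify $\Lambda_s$ as the connected component of $0$ after deleting those edges (that last, formal part of your argument is correct as written). The problem is that the one step you correctly identify as ``the crux'' and ``the main obstacle of the proof'' is not actually carried out. You assert that $\|\cdot\|_2$-maximality makes $\pm s$-edges ``combinatorially extremal'' via the parallelogram identity, ``characteristic counts of short cycles,'' or ``common neighbors,'' but none of these is developed or verified, and it is far from clear that such local edge statistics can separate a maximal generator from the others (two distinct generators of $S$ can easily have identical local cycle and common-neighbour counts). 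As it stands, the proposal reduces the lemma to an unproved claim.

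The invariant the paper actually uses (Lemma~\ref{lem:algebraic:convexgeodesic:correspondence}) is global rather than local: an algebraic line $n\mapsto v+ns$ with $s$ of maximal Euclidean norm is a \emph{convex geodesic line}, because writing $(n-m)s=\sum_i(g_{i+1}-g_i)$ for any geodesic segment between $\gamma(m)$ and $\gamma(n)$ and applying the triangle inequality together with maximality forces $k=n-m$ and $g_{i+1}-g_i=s$ for every step; conversely, every convex geodesic line is algebraic. Being a convex geodesic line is a purely graph-theoretic property, hence preserved by $\varphi$. Moreover, even granting that $\varphi$ preserves this class of lines, you still need that \emph{all} algebraic lines of type $s$ are sent to algebraic lines of one and the same type $s'$ (a priori the image class could mix several maximal types of $S'$); the paper settles this by the translation-invariance computation $d'(n(s'-t'),w'-v')=d(0,w-v)$, which shows $s'-t'$ has finite order and hence vanishes. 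Your proposal silently assumes this uniformity. Both of these ingredients would need to be supplied to close the gap.
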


\begin{proof}
  The result will be an immediate consequence of Lemma
  \ref{lem:algebraic:convexgeodesic:correspondence}
\end{proof}

The workhorse of the proof is the following combination of Proposition
2.5 and Proposition 2.10 in \cite{Loeh-2013}. For this we need to
introduce some concepts from the theory of (Cayley) graphs.  Firstly,
an \emph{algebraic line} of type $s \in S$ in $\Cayley(\Lambda,S)$ is
a $\Z$-path $\gamma: \Z \rightarrow \Lambda$ with $n \mapsto v + ns$, for some
$v\in \Lambda$.  Secondly, a \emph{geodesic segment} in a graph
$\Gamma=(V,E)$ is a finite path $(v_0,\ldots,v_n)$ with shortest path
distance $d_{\Gamma}(v_0,v_n) = n$ and a \emph{geodesic line} is a
$\Z$-path $\gamma:\Z \rightarrow V$, such that
$d_{\Gamma}(\gamma(i),\gamma(j)) = |i-j|$ for all $i,j \in \Z$.
Lastly, a \emph{convex geodesic line} in $\Gamma$ is a geodesic line
$\gamma$, such that for all $m,n \in \Z$ with $m\leq n$ there is a
unique geodesic segment in $\Gamma$ starting at $\gamma(m)$ and ending
at $\gamma(n)$.

\begin{lemma}
  \label{lem:algebraic:convexgeodesic:correspondence}
  Let $\Lambda,\Lambda'$ be lattices and $S,S'$ be finite, centrally symmetric
  generating sets of $\Lambda,\Lambda'$ and let
  $\varphi : \Cayley(\Lambda,S) \rightarrow \Cayley(\Lambda',S')$ be a graph
  isomorphism. Then:
  \begin{enumerate}
  \item If $s$ is a $\|\cdot\|_2$-maximal element of $S$, then all
    algebraic lines of type $s$ are convex geodesic lines.
    \item $\varphi$ maps convex geodesic lines to convex geodesic lines.
    \item Every convex geodesic line is algebraic.
    \item If $\gamma$ and $\eta$ are algebraic lines of type $s$ in
      $\Cayley(\Lambda,S)$, then $\varphi \circ \gamma$ and
      $\varphi \circ \eta$ are algebraic lines of the same type in
      $\Cayley(\Lambda',S')$. In particular, their type is
      $s' = \varphi(s)$.
  \end{enumerate}
\end{lemma}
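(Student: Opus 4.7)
My plan is to prove the four assertions in order, using the interplay between the Euclidean geometry of $S$ and the combinatorics of $\Cayley(\Lambda,S)$.

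\emph{Part (1).} I would compute graph distances along $\gamma(n)=v+ns$. The bound $d_\Gamma(\gamma(m),\gamma(n))\le |n-m|$ is immediate. For the reverse bound, any walk of length $k$ from $\gamma(m)$ to $\gamma(n)$ writes $(n-m)s=s_1+\cdots+s_k$ with $s_i\in S$; taking Euclidean norms and using $\|s_i\|_2\le\|s\|_2$ gives $k\ge |n-m|$, so $\gamma$ is a geodesic line. Equality at $k=|n-m|$ also forces $\|s_i\|_2=\|s\|_2$ for every $i$ together with equality in the triangle inequality for $\|\cdot\|_2$; strict convexity of the Euclidean norm then forces $s_i=s$ for every $i$, giving uniqueness of the geodesic segment between any two points of $\gamma$.

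\emph{Parts (2) and (3).} Part (2) is a tautology: graph isomorphisms preserve shortest-path distances and the uniqueness of geodesic segments, and therefore send convex geodesic lines to convex geodesic lines. For part (3), let $\gamma$ be a convex geodesic line and set $s_n:=\gamma(n+1)-\gamma(n)\in S$. The alternative path $(\gamma(n),\gamma(n)+s_{n+1},\gamma(n+2))$ is a legitimate walk of length two from $\gamma(n)$ to $\gamma(n+2)$, because its second step $\gamma(n+2)-(\gamma(n)+s_{n+1})=s_n$ also lies in $S$. Since $\gamma$ is geodesic, $d_\Gamma(\gamma(n),\gamma(n+2))=2$, so both this walk and the original $(\gamma(n),\gamma(n+1),\gamma(n+2))$ are geodesic segments; convexity of $\gamma$ forces them to coincide, yielding $s_n=s_{n+1}$. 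Hence there is a fixed $s\in S$ with $\gamma(n)=\gamma(0)+ns$.

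\emph{Part (4).} After replacing $\varphi$ by the graph isomorphism $v\mapsto \varphi(v)-\varphi(0)$ I may assume $\varphi(0)=0$. Applying (1)--(3) to the algebraic line $\gamma_v(n)=v+ns$ produces $t_v\in S'$ with $\varphi(v+ns)=\varphi(v)+n\,t_v$, and the identity $\gamma_{v+ks}(n)=\gamma_v(n+k)$ shows that $t_v$ depends only on the coset $v+\Z s$. I expect the hardest step to be promoting this to full independence of $v$; the key is a finiteness argument. For every $r\in S$ and every $n\in\Z$, the vertices $\gamma_v(n)$ and $\gamma_{v+r}(n)$ differ by $r$, so their images under $\varphi$ are adjacent in $\Cayley(\Lambda',S')$, giving
\[
\bigl(\varphi(v+r)-\varphi(v)\bigr)+n\bigl(t_{v+r}-t_v\bigr)\;=\;\varphi(\gamma_{v+r}(n))-\varphi(\gamma_v(n))\;\in\;S'
\]
for every $n\in\Z$. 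Since $S'$ is finite, this affine function of $n$ must be constant, forcing $t_{v+r}=t_v$. Because $S$ generates $\Lambda$, iterating this equality along a walk from $0$ to any $v$ gives $t_v=t_0=\varphi(s)=:s'$ for every $v\in\Lambda$, which completes (4).
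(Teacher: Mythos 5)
Your proof is correct, and for part (1) it is essentially the paper's argument: bound the length of an arbitrary walk from $\gamma(m)$ to $\gamma(n)$ by taking Euclidean norms, then use the equality case of the triangle inequality together with $\|\cdot\|_2$-maximality of $s$ to force every step to equal $s$, which gives both the geodesic property and uniqueness. Where you diverge is that you actually prove (2) and (3), which the paper outsources entirely to Proposition 2.5 of L\"oh; your swap argument for (3) --- exchanging consecutive steps $s_n, s_{n+1}$ to produce a second geodesic segment unless they coincide --- is a clean self-contained replacement. For (4) your route differs in detail from the paper's: the paper compares the two parallel lines $\gamma,\eta$ directly, using that $\varphi$ is an isometry and that the graph metric is translation invariant to conclude that $d'(n(s'-t'), w'-v')$ is independent of $n$, so $\{n(s'-t')\}$ sits in a finite ball and $s'-t'$ is torsion, hence zero. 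You instead fix the type $t_v$ of the image of each line $\gamma_v$, use preservation of adjacency to place the arithmetic progression $(\varphi(v+r)-\varphi(v))+n(t_{v+r}-t_v)$ inside the finite set $S'$, and propagate $t_{v+r}=t_v$ along the generating set. Both arguments reduce to ``an infinite arithmetic progression in a finite set has zero increment,'' so the mechanism is the same; the paper's version handles two arbitrary parallel lines in one step, while yours needs the connectivity of the Cayley graph but uses only adjacency rather than the full distance function. One shared caveat, not a defect of your write-up relative to the paper: in (4) both proofs implicitly need $s$ to be $\|\cdot\|_2$-maximal so that (1)--(3) apply and the images are known to be algebraic lines at all; you at least say explicitly that you are applying (1)--(3).
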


\begin{proof}
  (1) through (3) make up Proposition 2.5 in \cite{Loeh-2013}.

  We give a short sketch of the proof of (1) to highlight the
  importance of $\|\cdot\|_2$-maximality of $s$.

  For this choose an arbitrary geodesic segment $(g_0,\ldots,g_k)$ in
  $\Cayley(\Lambda,S)$ connecting points $\gamma(m),\gamma(n)$ on $\gamma$
  (assume $m \leq n$). Write
  \[
    (n-m) s = \gamma(n)-\gamma(m) = \sum_{i=0}^{k-1} g_{i+1} - g_i  
  \]
  where $g_{i+1} - g_i \in S$. Then by the maximality of $s$ we can
  conclude, using the triangle inequality, that $n-m = k$,
  $\|g_{i+1}-g_i\|_2 = \|s\|_2$ and then $g_{i+1}-g_i = s$. From this
  it follows that the arbitrary geodesic segment $(g_0,\ldots,g_k)$ is
  a segment on $\gamma$ as needed.

  (4) is a simplification of Proposition 2.10 in \cite{Loeh-2013}. We
  present a streamlined version of the proof here:
  Write $\gamma' = \varphi \circ \gamma$ and $\eta' = \varphi \circ \eta$ and
  elementwise for $n \in \Z$
  \begin{align*}
    \gamma(n) &= v + n s\\
    \eta(n) &= w + n s\\
    \gamma'(n) &= v'+ n s'\\
    \eta'(n) &= w' + n t'.
  \end{align*} 
  We have to show $s'= t'$. We consider the graph distances $d$ and
  $d'$ on $\Cayley(\Lambda,S)$ and $\Cayley(\Lambda',S')$. Since
  $\varphi$ is an isometry and the graph distance is translation
  invariant we can compute
  \begin{align*}
    d'(n(s'-t'),w'-v') &= d'(\gamma'(n),\eta'(n)) \\
    &= d(\gamma(n),\eta(n))\\
    &= d(n(s-s),w-v) = d(0,w-v)
  \end{align*}
  and observe that the right hand side is independent of $n$. Therefore,
  the set $\{ n (s'-t') \ :\ n \in \Z \}$ is contained in a ball of
  finite radius around $w'-v'$ and is hence finite. This implies
  that $s'-t'$ has finite order in $\Lambda$ and since $\Lambda$ is torsion free,
  $s'-t' = 0$.
\end{proof}

Now we can prove the isomorphism characterization.

\begin{theorem}
  \label{thm:cayley:isomorphism}
  Let $\Lambda,\Lambda'$ be lattices and $S,S'$ be finite centrally
  symmetric generating sets of $\Lambda,\Lambda'$. If
  $\varphi : \Cayley(\Lambda,S) \rightarrow \Cayley(\Lambda',S')$ is
  a graph isomorphism, then $\varphi$ is an affine map. In particular,
  if $\varphi(0) = 0$, then $\varphi$ is linear.
\end{theorem}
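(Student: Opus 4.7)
The plan is an induction on $|S|$, preceded by a reduction to the case $\varphi(0)=0$. Since translation by $-\varphi(0)$ is a graph automorphism of $\Cayley(\Lambda',S')$ (Cayley graphs being vertex-transitive under the group action), the map $\tilde\varphi(v) = \varphi(v)-\varphi(0)$ is again a graph isomorphism, and $\varphi$ is affine if and only if $\tilde\varphi$ is linear. So I may assume $\varphi(0)=0$ and must show $\varphi$ is $\Z$-linear.

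The base case $S = \emptyset$ gives $\Lambda = \{0\}$, which is trivial. For the inductive step I pick an $\|\cdot\|_2$-maximal element $s \in S$ and set $s' = \varphi(s)$. By Lemma~\ref{lem:cayley:reduction}, the restriction of $\varphi$ to $\Lambda_s$ is a graph isomorphism $\Cayley(\Lambda_s,S_s)\to\Cayley(\Lambda'_{s'},S'_{s'})$. Since $|S_s| = |S|-2$ and the restriction sends $0$ to $0$, the induction hypothesis yields that $\varphi|_{\Lambda_s}$ is a $\Z$-linear isomorphism.

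To extend linearity to all of $\Lambda$, I apply Lemma~\ref{lem:algebraic:convexgeodesic:correspondence}(4) to the algebraic lines of the form $n \mapsto v + ns$: for every $v \in \Lambda$ and $n \in \Z$ one obtains $\varphi(v+ns)=\varphi(v)+ns'$. Because $S$ generates $\Lambda$, we have $\Lambda = \Lambda_s + \Z s$, so any $w \in \Lambda$ decomposes as $w = u + ns$ with $u \in \Lambda_s$ and $n \in \Z$, giving $\varphi(w) = \varphi(u) + ns'$. Combined with $\Z$-linearity of $\varphi|_{\Lambda_s}$, this identity shows $\varphi(aw_1 + bw_2) = a\varphi(w_1) + b\varphi(w_2)$ for all $w_1,w_2\in\Lambda$ and $a,b\in\Z$ by a direct computation.

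The only subtle point is the well-definedness of this extension when the decomposition $w = u + ns$ is not unique, i.e.\ when some nonzero multiple $ms$ lies in $\Lambda_s$; consistency then reduces to the identity $\varphi|_{\Lambda_s}(ms) = ms'$, which is the $v=0$ instance of the line formula. Given the two lemmas the induction is essentially formal, so the main conceptual obstacle is really already packaged into Lemma~\ref{lem:algebraic:convexgeodesic:correspondence}, whose delicate ingredients are the use of $\|\cdot\|_2$-maximality of $s$ in part (1) and the torsion-freeness of $\Lambda$ in part (4).
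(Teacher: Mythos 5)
Your proposal is correct and follows essentially the same route as the paper's proof: reduce to $\varphi(0)=0$, induct on $|S|$ by removing a $\|\cdot\|_2$-maximal pair $\pm s$, apply the reduction lemma plus the induction hypothesis on $\Lambda_s$, and use part (4) of Lemma~\ref{lem:algebraic:convexgeodesic:correspondence} to get $\varphi(x+ms)=\varphi(x)+ms'$ and hence additivity. Your extra remark on well-definedness of the decomposition $w=u+ns$ is a harmless addition (since $\varphi$ is already globally defined, the formula holds for every decomposition and no consistency check is actually needed).
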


\begin{proof}
  We can assume that $\varphi(0) = 0$, since this can always be
  achieved by composing $\varphi$ with a suitable translation.

  The proof then proceeds by induction on the size of $S$.  Let
  $s \in S$ be a $\|\cdot\|_2$-maximal element of $S$.  By Lemma
  \ref{lem:cayley:reduction} we know that the restriction $\varphi_s$
  induces a graph isomorphism
  $\Cayley(\Lambda_s,S_s) \cong \Cayley(\Lambda'_{s'},S'_{s'})$.  By induction
  hypothesis, this is a linear isomorphism.

  Now we show that this implies that also $\varphi$ is linear. For
  this take arbitrary $v,w \in \Lambda$ and write them as
  \[
    v = x + m s, \quad w = y + n s
  \]
  with $x,y \in \Lambda_s$, $m,n \in \Z$.
  Then
  \[
    \varphi(v) = \varphi(x) + m s', \ \varphi(w) = \varphi(y) + n s', \ \text{and} \ \varphi(v+w) = \varphi(x+y) + (m+n)s'.
  \]
  Since $\varphi$ acts on $\Lambda_s$ as $\varphi_s$, and the latter
  is additive, we obtain
  \[
    \varphi(v+w) = \varphi(x+y) + (m+n)s' = (\varphi(x) + m s') + (\varphi(y) + n s') = \varphi(v) + \varphi(w).
  \]
  So $\varphi$ is indeed additive. 
\end{proof}

This approach suggests a finite algorithm to test isomorphism of
Cayley graphs of two lattices: Let $\Lambda$ and $\Lambda'$ be
lattices with finite centrally symmetric generating sets $S$ and
$S'$. For simplicity, assume that both $\Lambda$ and $\Lambda'$ are
full-dimensional in their respective dimensions.

\begin{enumerate}
 \item First, fix an arbitrary (vector space) basis contained in $S$, say
   $s_1,\ldots,s_n$.
   \item Second, consider all linear maps defined by sending $s_1,\ldots,s_n$ to $s'_1,\ldots,s'_n$, where the latter runs through
all bases contained in $S'$.
\item For each such map $\varphi$ we now check whether $S$ is mapped
  onto $S'$.  If that is the case, $\varphi$ is an isomorphism of the
  associated Cayley graphs.
\end{enumerate}

This algorithm terminates, because $S'$ is finite. It is correct
because Theorem \ref{thm:cayley:isomorphism} asserts that any
isomorphism of the associated Cayley graphs is of this form.

\section{Bounds for $\chi(\Lambda)$ and proof of
  Theorem~\ref{thm:main}}
\label{sec:bounds}

Now we consider each of the $16$ cases identified in Section
\ref{sec:classification-graphs} separately.  Assume
$C = \Cayley(\Lambda, \Vor(\Lambda))$ is the Cayley graph of a lattice
$\Lambda$.

First, in Section \ref{sec:dpb}, we will consider the finite induced
subgraph $$C_1 = \Cayley(\Lambda, \Vor(\Lambda))_1$$ with vertices
$\{0\} \cup \Vor(\Lambda)$.  This subgraph captures the combinatorial
structure, the vertex-edge graph, of all Delaunay polytopes of
$\Lambda$ containing the origin as a vertex.  The chromatic number of
this subgraph provides a lower bound for $\chi(\Lambda)$ which we call the
\emph{Delaunay polytope bound (DPB)}.  We can explicitly compute this
bound by reformulating the chromatic number problem in
terms of a Boolean satisfiability problem (SAT) and using a SAT
solver.

In Section \ref{sec:dtb} we discuss a method to generate periodic
colorings of $C$, which we then use to provide an upper bound on
$\chi(\Lambda)$, which we call the \emph{discrete torus bound (DTB)}.
The validity of the proposed periodic colorings comes from a
quotient-like construction of finite graphs and it is again verified through a
reformulation as a SAT problem.

In Section \ref{ssec:proof} we explain the computational proof of
the main theorem. We recall how to reformulate the determination of
the chromatic number as a SAT formula. We also comment on the
accompanying Magma program used to perform the involved
computational verifications.

\subsection{The Delaunay polytope bound}
\label{sec:dpb}

Consider the induced subgraph $C_1$ of the Cayley graph
$C = \Cayley(\Lambda, \Vor(\Lambda))$ of a lattice $\Lambda$ with
vertex set $V_1 = \{0\} \cup \Vor(\Lambda)$.  This is precisely the
vertex-edge graph of the polyhedral complex of all Delaunay polytopes
of $\Lambda$ containing the origin.  Indeed if $[x,y]$ is an edge of
any such Delaunay polytope $D$, then $x-y$ is a relevant vector, since
the Voronoi cells $x + V(\Lambda)$ and $y + V(\Lambda)$ meet in a
common facet by duality of Voronoi and Delaunay tessellations. This
gives us the \emph{Delaunay polytope bound}.

\begin{lemma} \label{lem:dpb}
  With the above
  \[
    \chi(\Lambda) = \chi(C) \geq \chi(C_1).
 \]
\end{lemma}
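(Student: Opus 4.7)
The plan is to split the statement into two parts: the equality $\chi(\Lambda) = \chi(C)$ and the inequality $\chi(C) \geq \chi(C_1)$.

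The equality is essentially unpacking definitions. In the introduction it is established that the chromatic number of a lattice $\Lambda$, defined in terms of coloring facet-adjacent lattice translates of the Voronoi cell, equals the chromatic number of the Cayley graph $\Cayley(\Lambda, \Vor(\Lambda))$. Since $C$ is by definition this Voronoi graph, we obtain $\chi(\Lambda) = \chi(C)$ with no further work.

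For the inequality, I would verify that $C_1$ really is an induced subgraph of $C$. Its vertex set $V_1 = \{0\} \cup \Vor(\Lambda)$ sits inside $\Lambda$, and the edges of $C_1$ are by construction exactly those edges of $C$ whose endpoints both lie in $V_1$ (that is, all pairs $v, w \in V_1$ with $v - w \in \Vor(\Lambda)$). Then I would invoke the elementary fact from graph theory that chromatic number is monotone under passage to induced subgraphs: given any proper coloring of $C$ with $\chi(C)$ colors, its restriction to $V_1$ is still a proper coloring of $C_1$, so $\chi(C_1) \leq \chi(C)$. The geometric interpretation mentioned just before the lemma statement, namely that $C_1$ is the vertex-edge graph of the complex of Delaunay polytopes containing the origin, is useful context but plays no role in this particular argument; it justifies calling the bound the Delaunay polytope bound and explains why the induced subgraph in question has an intrinsic geometric meaning.

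There is no real obstacle here. The lemma is a near-tautology: the equality is the definition of $\chi(\Lambda)$, and the inequality is the standard induced-subgraph monotonicity of chromatic number applied to the specific finite subgraph $C_1 \subseteq C$.
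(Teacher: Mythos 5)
Your proof is correct and matches the paper's (implicit) reasoning: the paper states this lemma without a formal proof, relying on exactly the two observations you make — the equality is the definitional identity from the introduction, and the inequality is the standard monotonicity of the chromatic number under passing to the induced subgraph $C_1$ of $C$. Nothing is missing.
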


\begin{figure}\label{fig:dpb}
   \tikzstyle{polygon}=[thin]
  \tikzstyle{point}=[ultra thin,draw=gray,cross out,inner sep=0pt,minimum width=4pt,minimum height=4pt,]
  \tikzstyle{line}=[red]
  \begin{tikzpicture}[scale=1.5]

      \coordinate (A) at (-.66,-.33);
      \coordinate (B) at (-.33,  .33);
      \coordinate (D) at (.66, .33);
      \coordinate (C) at (.33, .66);
      \coordinate (F) at (-.33, -.66);
      \coordinate (E) at (.33, -.33);

      \draw[polygon] (A) -- (B) -- (C) -- (D) -- (E) -- (F)  -- cycle;


      
      \coordinate (A1) at (.33,-.33);
      \coordinate (B1) at (.66,  .33);
      \coordinate (C1) at (1.33, .66);
      \coordinate (D1) at (1.66, .33);
      \coordinate (E1) at (1.33, -.33);
      \coordinate (F1) at (.66, -.66);
      
      \draw[polygon]  (A1) -- (B1) -- (C1) -- (D1) -- (E1) -- (F1)  -- cycle;
      \coordinate (A2) at (.33,.66);
      \coordinate (B2) at (.66,  1.33);
      \coordinate (C2) at (1.33, 1.66);
      \coordinate (D2) at (1.66, 1.33);
      \coordinate (E2) at (1.33, .66);
      \coordinate (F2) at (.66, .33);
  \draw[polygon]  (A2) -- (B2) -- (C2) -- (D2) -- (E2) -- (F2)  -- cycle;

  \coordinate (A3) at (-.66,.66);
      \coordinate (B3) at (-.33,  1.33);
      \coordinate (D3) at (.66, 1.33);
      \coordinate (C3) at (.33, 1.66);
      \coordinate (F3) at (-.33, .33);
      \coordinate (E3) at (.33, .66);
  \draw[polygon]  (A3) -- (B3) -- (C3) -- (D3) -- (E3) -- (F3)  -- cycle;

  \coordinate (A4) at (-.66,1.66);
      \coordinate (B4) at (-.33,  2.33);
      \coordinate (D4) at (.66, 2.33);
      \coordinate (C4) at (.33, 2.66);
      \coordinate (F4) at (-.33, 1.33);
      \coordinate (E4) at (.33, 1.66);

  \coordinate (A5) at (.33,1.66);
      \coordinate (B5) at (.66,  2.33);
      \coordinate (C5) at (1.33, 2.66);
      \coordinate (D5) at (1.66, 2.33);
      \coordinate (E5) at (1.33, 1.66);
      \coordinate (F5) at (.66, 1.33);
  \draw[polygon]  (A5) -- (B5) -- (C5) -- (D5) -- (E5) -- (F5)  -- cycle;

  \coordinate (A6) at (1.33,-.33);
      \coordinate (B6) at (1.66,  .33);
      \coordinate (C6) at (2.33, .66);
      \coordinate (D6) at (2.66, .33);
      \coordinate (E6) at (2.33, -.33);
      \coordinate (F6) at (1.66, -.66);
      

  \coordinate (A7) at (1.33,.66);
      \coordinate (B7) at (1.66,  1.33);
      \coordinate (C7) at (2.33, 1.66);
      \coordinate (D7) at (2.66, 1.33);
      \coordinate (E7) at (2.33, .66);
      \coordinate (F7) at (1.66, .33);
      
      \draw[polygon]  (A7) -- (B7) -- (C7) -- (D7) -- (E7) -- (F7)  -- cycle;

      \coordinate (A8) at (1.33,1.66);
      \coordinate (B8) at (1.66,  2.33);
      \coordinate (C8) at (2.33, 2.66);
      \coordinate (D8) at (2.66, 2.33);
      \coordinate (E8) at (2.33, 1.66);
      \coordinate (F8) at (1.66, 1.33);
      
      \draw[polygon]  (A8) -- (B8) -- (C8) -- (D8) -- (E8) -- (F8)  -- cycle;

      \coordinate (0) at (0,0);
      \coordinate (1) at (1,0);
      \coordinate (2) at (0, 1);
      \coordinate (3) at (1, 1);
      \coordinate (4) at (2,0);
      \coordinate (5) at (0,2);
      \coordinate (6) at (1,2);
      \coordinate (7) at (2,1);
      \coordinate (8) at (2,2);

      \draw[lcbeige] (0,0)--(1,0);
      \draw[lcbeige] (0,1)--(2,1);
      \draw[lcbeige] (1,2)--(2,2);

      \draw[lcbeige] (0,0)--(0,1);
      \draw[lcbeige] (1,0)--(1,2);
      \draw[lcbeige] (2,1)--(2,2);

      \draw[lcbeige] (0,1)--(1,2);
      \draw[lcbeige] (0,0)--(2,2);
      \draw[lcbeige] (1,0)--(2,1);




  \node[circle,inner sep=.6pt,fill=black,label={[scale=.5]below left:$(0,-1)$}] at (1,0) {};
  \node[circle,inner sep=.6pt,fill=black,label={[scale=.5]above left:$(-1,0)$}] at (0,1) {};
  \node[circle,inner sep=.6pt,fill=black,label={[scale=.5]below left:$(-1,-1)$}] at (0,0) {};
  \node[circle,inner sep=.6pt,fill=black,label={[scale=.5]above left:$(0,1)$}] at (1, 2) {};

  \draw[lcred] (1,1) -- (2,2);
  \draw[lcred] (2,1) -- (2,2);
  \draw[lcred] (1,1) -- (2,1);

  \draw (1.66,1.33) ellipse [rotate=135,x radius=.471, y radius=.813];
  \node[circle,inner sep=1.5pt,fill=lcblue,label={[scale=.5]above right:$(1,1)$}] at (2,2) {};
  \node[circle,inner sep=1.5pt,fill=lcblue,label={[scale=.5]below right:$(1,0)$}] at (2,1) {};
  \node[circle,inner sep=1.5pt,fill=lcblue,label={[scale=.5]above left:$(0,0)$}] at (1,1) {};
  \node[circle,inner sep=1.5pt,fill=lcblue,label={[scale=.5]left:$c$}] at (5/3,4/3) {};
  \end{tikzpicture}

  \caption{A Delaunay polytope (red) of the hexagonal lattice included in the vertex-edge graph of the polyhedral complex of all Delaunay polytopes of $\Lambda$ containing the origin and the
    Voronoi cells centered at its vertices. The red edges are a finite subgraph of the Voronoi graph of the hexagonal lattice and
    provide a certificate that $\chi(A_2) \geq 3$.}
\end{figure}
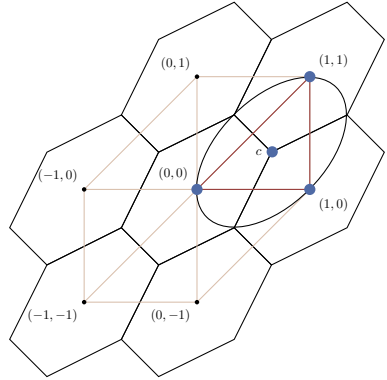

\subsection{The discrete torus bound}
\label{sec:dtb}

The goal of this subsection is to explicitly construct periodic colorings
of a lattice $\Lambda$. For this we construct a finite auxiliary
object, which we will refer to as a discrete torus of $\Lambda$: Let
$\Lambda' \subseteq \Lambda$ be a full dimensional sublattice, this
induces a graph, the \emph{discrete torus} of $\Lambda$ by $\Lambda'$,
with vertex set $V = \Lambda/\Lambda'$ and edge set
\[
  E = \{ \{x + \Lambda',y + \Lambda'\} \ :\ \text{there is } v \in
\Lambda' \text{ with }  v + x-y \in \Vor(\Lambda) \}. 
\]

\begin{lemma}
  \label{lem:dtb}
  Let $\Lambda' \subseteq \Lambda$ be a sublattice of $\Lambda$
  that does not contain any Voronoi relevant vector of $\Lambda$. Then
  a coloring of $\Lambda/\Lambda'$ can be $\Lambda'$-periodically
  extended to a coloring of $\Lambda$. If $\chi(\Lambda/\Lambda') < \infty$ we have $\chi(\Lambda) \leq \chi(\Lambda/\Lambda')$.
\end{lemma}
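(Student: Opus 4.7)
The plan is to exhibit an explicit periodic extension and check that it is a proper coloring of the Voronoi graph, using the two hypotheses of the lemma precisely where they matter.

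First I would take any proper coloring $c : \Lambda/\Lambda' \to [k]$ of the discrete torus graph (where $k$ is the number of colors used), and define the candidate extension $\tilde{c} : \Lambda \to [k]$ by $\tilde{c}(x) = c(x + \Lambda')$. By construction $\tilde{c}$ is $\Lambda'$-periodic, i.e.\ $\tilde{c}(x + v) = \tilde{c}(x)$ for every $v \in \Lambda'$, so the only thing to verify is that $\tilde{c}$ is a proper coloring of $\Cayley(\Lambda, \Vor(\Lambda))$.

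To verify this, I would take an arbitrary edge of $\Cayley(\Lambda, \Vor(\Lambda))$, that is, two vertices $x,y \in \Lambda$ with $x-y \in \Vor(\Lambda)$, and argue in two steps. First, the hypothesis that $\Lambda'$ contains no Voronoi relevant vector of $\Lambda$ forces $x - y \notin \Lambda'$, so the cosets $x+\Lambda'$ and $y+\Lambda'$ are distinct. Second, with the choice $v = 0 \in \Lambda'$ the definition of the edge set of the discrete torus gives that $\{x+\Lambda', y+\Lambda'\}$ is an edge, because $v + (x-y) = x - y \in \Vor(\Lambda)$. Since $c$ is a proper coloring of that graph, this yields $\tilde{c}(x) = c(x+\Lambda') \neq c(y+\Lambda') = \tilde{c}(y)$, as required.

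For the quantitative statement, if $\chi(\Lambda/\Lambda') = k < \infty$, I would apply the above construction to a proper $k$-coloring of $\Lambda/\Lambda'$ to obtain a proper $k$-coloring of $\Cayley(\Lambda, \Vor(\Lambda))$, from which $\chi(\Lambda) \le k = \chi(\Lambda/\Lambda')$ follows immediately. There is no real obstacle here: the only subtle point is the first step of the verification, where one must use the assumption on $\Lambda'$ to rule out the degenerate case that both endpoints of a $\Vor(\Lambda)$-edge project to the same vertex of the discrete torus; without this assumption the extension $\tilde{c}$ would fail to separate such pairs and could not be proper regardless of how $c$ is chosen.
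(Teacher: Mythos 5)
Your proof is correct and follows essentially the same route as the paper: define the $\Lambda'$-periodic extension of a proper coloring of the discrete torus and use the hypothesis that $\Lambda'$ contains no Voronoi relevant vector to ensure adjacent lattice points land in distinct (and, by the definition of the torus edge set with $v=0$, adjacent) cosets. You spell out more explicitly than the paper that every edge of $\Cayley(\Lambda,\Vor(\Lambda))$ projects to an edge of $\Lambda/\Lambda'$, which the paper leaves implicit after noting that each coset is an independent set.
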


\begin{proof}
  By assumption $\Lambda'$ does not contain any relevant Voronoi
  vectors of $\Lambda$, this implies that every coset $v + \Lambda'$
  is an independent set in $\Cayley(\Lambda,\Vor(\Lambda))$. Thus any
  coloring of $\Lambda / \Lambda'$ can be extended
  $\Lambda'$-periodically as claimed and therefore
  $\chi(\Lambda/\Lambda') \geq \chi(\Lambda)$.
\end{proof}

For any choice of a full-dimensional $\Lambda'$ we refer to the bound of the above lemma
as the \emph{discrete torus bound} induced by $\Lambda'$.

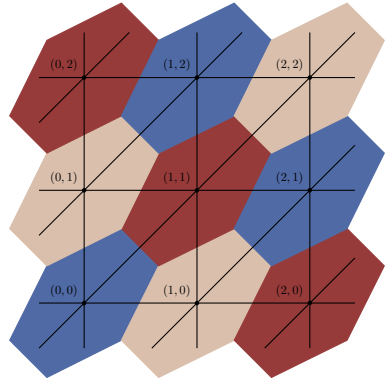
\begin{figure}
  \tikzstyle{polygon}=[thin]
  \tikzstyle{point}=[ultra thin,draw=gray,cross out,inner sep=0pt,minimum width=4pt,minimum height=4pt,]
  \tikzstyle{line}=[red]
  \begin{tikzpicture}[scale=1.5]
      \coordinate (A) at (-.66,-.33);
      \coordinate (B) at (-.33,  .33);
      \coordinate (D) at (.66, .33);
      \coordinate (C) at (.33, .66);
      \coordinate (F) at (-.33, -.66);
      \coordinate (E) at (.33, -.33);

      \draw[ fill,lcblue!100] (A) -- (B) -- (C) -- (D) -- (E) -- (F)  -- cycle;


      
      \coordinate (A1) at (.33,-.33);
      \coordinate (B1) at (.66,  .33);
      \coordinate (C1) at (1.33, .66);
      \coordinate (D1) at (1.66, .33);
      \coordinate (E1) at (1.33, -.33);
      \coordinate (F1) at (.66, -.66);
      
      \draw[fill,lcbeige!100]  (A1) -- (B1) -- (C1) -- (D1) -- (E1) -- (F1)  -- cycle;
      \coordinate (A2) at (.33,.66);
      \coordinate (B2) at (.66,  1.33);
      \coordinate (C2) at (1.33, 1.66);
      \coordinate (D2) at (1.66, 1.33);
      \coordinate (E2) at (1.33, .66);
      \coordinate (F2) at (.66, .33);
  \draw[fill,lcred!100]  (A2) -- (B2) -- (C2) -- (D2) -- (E2) -- (F2)  -- cycle;

  \coordinate (A3) at (-.66,.66);
      \coordinate (B3) at (-.33,  1.33);
      \coordinate (D3) at (.66, 1.33);
      \coordinate (C3) at (.33, 1.66);
      \coordinate (F3) at (-.33, .33);
      \coordinate (E3) at (.33, .66);
  \draw[fill,lcbeige!100]  (A3) -- (B3) -- (C3) -- (D3) -- (E3) -- (F3)  -- cycle;

  \coordinate (A4) at (-.66,1.66);
      \coordinate (B4) at (-.33,  2.33);
      \coordinate (D4) at (.66, 2.33);
      \coordinate (C4) at (.33, 2.66);
      \coordinate (F4) at (-.33, 1.33);
      \coordinate (E4) at (.33, 1.66);
  \draw[fill,lcred!100]  (A4) -- (B4) -- (C4) -- (D4) -- (E4) -- (F4)  -- cycle;

  \coordinate (A5) at (.33,1.66);
      \coordinate (B5) at (.66,  2.33);
      \coordinate (C5) at (1.33, 2.66);
      \coordinate (D5) at (1.66, 2.33);
      \coordinate (E5) at (1.33, 1.66);
      \coordinate (F5) at (.66, 1.33);
  \draw[fill,lcblue!100]  (A5) -- (B5) -- (C5) -- (D5) -- (E5) -- (F5)  -- cycle;

  \coordinate (A6) at (1.33,-.33);
      \coordinate (B6) at (1.66,  .33);
      \coordinate (C6) at (2.33, .66);
      \coordinate (D6) at (2.66, .33);
      \coordinate (E6) at (2.33, -.33);
      \coordinate (F6) at (1.66, -.66);
      
      \draw[fill,lcred!100]  (A6) -- (B6) -- (C6) -- (D6) -- (E6) -- (F6)  -- cycle;

  \coordinate (A7) at (1.33,.66);
      \coordinate (B7) at (1.66,  1.33);
      \coordinate (C7) at (2.33, 1.66);
      \coordinate (D7) at (2.66, 1.33);
      \coordinate (E7) at (2.33, .66);
      \coordinate (F7) at (1.66, .33);
      
      \draw[fill,lcblue!100]  (A7) -- (B7) -- (C7) -- (D7) -- (E7) -- (F7)  -- cycle;

      \coordinate (A8) at (1.33,1.66);
      \coordinate (B8) at (1.66,  2.33);
      \coordinate (C8) at (2.33, 2.66);
      \coordinate (D8) at (2.66, 2.33);
      \coordinate (E8) at (2.33, 1.66);
      \coordinate (F8) at (1.66, 1.33);
      
      \draw[fill,lcbeige!100]  (A8) -- (B8) -- (C8) -- (D8) -- (E8) -- (F8)  -- cycle;

      \coordinate (0) at (0,0);
      \coordinate (1) at (1,0);
      \coordinate (2) at (0, 1);
      \coordinate (3) at (1, 1);
      \coordinate (4) at (2,0);
      \coordinate (5) at (0,2);
      \coordinate (6) at (1,2);
      \coordinate (7) at (2,1);
      \coordinate (8) at (2,2);

      \draw (-0.4,0)--(2.4,0);
      \draw (-0.4,1)--(2.4,1);
      \draw (-0.4,2)--(2.4,2);

      \draw (0,-0.4)--(0,2.4);
      \draw (1,-0.4)--(1,2.4);
      \draw (2,-0.4)--(2,2.4);

      \draw (-0.4,1.6)--(0.4,2.4);
      \draw (-0.4,0.6)--(1.4,2.4);
      \draw (-0.4,-0.4)--(2.4,2.4);
      \draw (0.6,-0.4)--(2.4,1.4);
      \draw (1.6,-0.4)--(2.4,0.4);


      \node[circle,inner sep=.6pt,fill=black,label={[scale=.47]above left:$(0,0)$}] at (0,0) {};
  \node[circle,inner sep=.6pt,fill=black,label={[scale=.47]above left:$(1,0)$}] at (1,0) {};
  \node[circle,inner sep=.6pt,fill=black,label={[scale=.47]above left:$(0,1)$}] at (0,1) {};
  \node[circle,inner sep=.6pt,fill=black,label={[scale=.47]above left:$(1,1)$}] at (1,1) {};
  \node[circle,inner sep=.6pt,fill=black,label={[scale=.47]above left:$(0,2)$}] at (0, 2) {};
  \node[circle,inner sep=.6pt,fill=black,label={[scale=.47]above left:$(2,0)$}] at (2, 0) {};
  \node[circle,inner sep=.6pt,fill=black,label={[scale=.47]above left:$(1,2)$}] at (1, 2) {};
  \node[circle,inner sep=.6pt,fill=black,label={[scale=.47]above left: $(2,1)$}] at (2,1) {};
  \node[circle,inner sep=.6pt,fill=black,label={[scale=.47]above left:$(2,2)$}] at (2, 2) {};
  \end{tikzpicture}

  \caption{The discrete torus graph $A_2/3A_2$ of the hexagonal
    lattice which has $9$ vertices and $27$ edges. This graph gives a
    certificate that $\chi(A_2) \leq 3$}
\end{figure}

\subsection{Proof of the main theorem}
\label{ssec:proof}

The proof of our main theorem ultimately involves a finite list of
computations, although this was not guaranteed apriori. The general strategy is as follows.  We separately
consider each of the 16 isomorphism classes of Cayley graphs of
four-dimensional lattices.  For each such graph $C$, we compute the
Delaunay polytope bound, that is the chromatic number of the subgraph
$C_1$ (see Section \ref{sec:dpb}).  Next we select a lattice $\Lambda$
representing $C$ and choose a suitable sublattice $\Lambda'$ such that
the discrete torus bound induced by $\Lambda'$ matches the Delaunay
polytope bound. It turns out, simply by trial and error, that the
sublattices $\Lambda'$ can be chosen to be of the form $c\Lambda$ for
some $c \in \N$, such that the bounds coincide. The information on
which lattice and constant $c$ we used for our computation can be
found in Table \ref{table:Cayley-graphs-computational-data}.

\begin{table}[htb]
  \begin{tabular}{lllllll}
    \hline
   \# & graph & $\Lambda$ & $c$ & $\chi$ \\
    \hline
    1 & $V_{30}^z$ & $K_5$ & 5 & 5\\
    2 &  $V_{30}^{z,n}$ & $K_{3,3}$ & 7 & 7\\
    3 &  $V_{30}^n$ & $111-$ & 6 & 6\\
    \hline
    4 &  $V_{28}^z$ & $K_5 - 1$ & 5 & 5\\
    5 &  $V_{28}^n$ & $211+$ & 6 & 6\\
      \hline
    6 &  $V_{26}^z$ & $K_5 - 1 - 1$ & 5 & 5\\
    7 &  $V_{26}^n$ & $221+$ & 6 & 6\\
      \hline
    8 &  $V_{24}^z$ & $K_5-2-1$ & 5 & 5\\
    9 &  $V_{24}^n$ & $222+$ & 4 & 4\\
      \hline
    10 &  $V_{22}^z$ & $C_{2221}$ & 4 & 4\\
    11 &  $V_{20}^z$ & $K_5 - 3$ & 5 & 5\\
    12 &  $V_{16}^z$ & $K_4+1$ & 4 & 4\\
    13 &  $V_{14}^z$ & $C_{221} + 1$ & 4 & 4\\
    14 &  $V_{12}^z$ & $C_3+C_3$ & 3 & 3\\
    15 &  $V_{10}^z$ & $C_3+1+1$ & 3 & 3\\
    16 &  $V_{8}^z$ & $1+1+1+1$ & 2 & 2\\
    \hline
  \end{tabular}
    
  \smallskip
    
  \caption{Computational data used to compute the chromatic number of
    the 16 cases of inequivalent Cayley graphs associated to $4$
    dimensional lattices: For each graph we specify which lattice
    $\Lambda$ and which constant $c$ we used to compute the discrete
    torus bound associated to $\Lambda/c\Lambda$. $\chi$ gives the
    chromatic number of the graph, as obtained by verifying that the
    Delaunay polytope bound and discrete torus bound coincide.}

  \label{table:Cayley-graphs-computational-data}
  
  \end{table}

  We also quickly recall how to compute the chromatic number of a
  finite graph by transforming the problem into a Boolean
  satisfiability problem, so that a standard SAT solver can be used.

  So, let $G = (V,E)$ be a finite graph. Since a coloring of $G$ is a
  partition of the vertex set into independent sets (the color
  classes) we have to model:
\begin{enumerate}
  \item If $v \in V$, then $v$ has a color.
  \item If $\{v,w\}\in E$, then $v$ and $w$ do not have the same color.
\end{enumerate}
Note that in principle these two clauses would allow a vertex $v$ to have multiple colors, we do not care about this, as we can then always extract a proper coloring by choosing one of the colors for each multi-colored vertex.

We look for the smallest number $k$ such that there exists a coloring
with $k$ color classes.  This can be rephrased as a Boolean formula in
conjunctive normal form (CNF) as follows. Write
$V = \{v_1,\ldots,v_n\}$; these are the variables. Assume that the
colors are given by $1,\ldots,k$. We use a total of $n\cdot k$
variables
\[
  \{x_{i,\ell}\}_{\substack{i=1,\ldots,n\\\ell=1,\ldots,k}}
\] 
where the value of $x_{i,\ell}$ is to be interpreted by
\[
 x_{i,\ell} = \begin{cases} 1, & \text{vertex $v_i$ has
      color $\ell$} \\ 0, & \text{else.} \end{cases}
\]
With these variables we formulate a SAT problem with two types of clauses:
\begin{enumerate}
  \item For every vertex $v_i$ the clause $x_{i,1} \vee \cdots \vee x_{i,k}$.
  \item For each pair of adjacent vertices $v_i,v_j$ the clause $\bigwedge_{l=1}^k \neg x_{i,\ell} \vee \neg x_{j,\ell}$.
\end{enumerate}
Now the chromatic number is the smallest $k$ for which the SAT problem
above is satisfiable.

\bigskip

We used the MAGMA program \texttt{chi-dimension-4.magma} to perform the
necessary computations and to generate the input for the SAT solver,
for which we used PySAT \cite{PySAT-2018}. This MAGMA program can be found as an
ancillary file from the \texttt{arXiv.org} e-print archive. The MAGMA
program contains the following:
\begin{enumerate}
\item The $52$ lattices associated to the $52$ inequivalent Delaunay
  subdivisions in dimension $4$. These are the lattices contained in
  Table \ref{table:zonotopal-cases} and Table \ref{table:nonzonotopal-cases};
\item A verification of Table \ref{table:Cayley-graphs}: of the $52$
  Cayley graphs associated to these lattices only $16$ are pairwise
  non-isomorphic;
\item The generation of SAT problems to confirm the Delaunay polytope
  bound and discrete torus bound. For each graph case
  we provide a number $c$ (which turns out to satisfy $c= \chi$) and
  \begin{enumerate}
    \item a certificate that $C_1$ is not $c-1$-colorable (DPB);
    \item a certificate that the associated discrete torus $\Lambda/c\Lambda$ is $c$ colorable (DTB).
  \end{enumerate}
\end{enumerate}
Here, we stick to the notation from Section \ref{sec:bounds} and Table
\ref{table:Cayley-graphs-computational-data}.

\section{Dimension 5 and onwards}
\label{sec:dimension-5}

We conclude the present discussion by some observations and open
questions.

Firstly, the complete classification of inequivalent Delaunay
subdivisions is also known in dimension $5$ by 
Dutour Sikiri\'c, Garber, Sch\"urmann, and Waldmann \cite{Dutour-2016}. So our brute-force
computational approach to find the chromatic number of all
$5$-dimensional lattices is, in principle, applicable.  However, all
our computations for dimension $5$ remained inconclusive: The size of
the finite graphs that need to be considered grows drastically, in
any case where a SAT solver could provide an answer in reasonable
time, the bounds were far apart.  We think that strengthening the
bounds is a more promising strategy than throwing a massive amount of
computational time on the problem.  For this reason we restricted to
the $4$-dimensional case for now.

So, some questions remain:
\begin{enumerate}
\item Can we strengthen the bounds presented here, or find a way to
  compute them faster?
\item \label{question-reviewer} The maximal chromatic number in dimension 4 is 7, while the best known theoretical upper bound is $2^4$. Can one prove that indeed $2^{n-1}$ is a valid upper bound on the chromatic number of $n$-dimensional lattices for $n \geq 3$? It is true in dimension $3$ by the results in \cite{Dutour-2021}.
\item For all lattices studied in this work and in \cite{Dutour-2021} we are aware of at least one periodic optimal coloring (while non-periodic optimal colorings may also exist). Now, given a general lattice, is it always true that there exists a periodic optimal coloring?
\item \label{question-periodic} More generally, if $\Lambda'$ is a sublattice of $\Lambda$, what
  can we say about $\Lambda'$-periodic (optimal) colorings of $\Lambda$? Do they
  exist? If so, how many such? How small can the index $[\Lambda: \Lambda']$ become? Is there a bound $B(n)$ (depending on the dimension) such that there always exists a $\Lambda'$-periodic optimal coloring with $[\Lambda:\Lambda'] \leq B(n)$. In all known cases $[\Lambda : \Lambda'] \leq \chi(\Lambda)^n$ is true.
\end{enumerate}

\section*{Acknowledgments}

The first named author likes to thank Mathieu Dutour Sikiri\'c for
helpful discussions. 

\smallskip

The authors also express their gratitude to the suggestions of one of the anonymous referees. In comparison to an earlier draft of this manuscript we included a suggested strengthening of the statement of Lemma \ref{lem:dtb}, adapted question \eqref{question-reviewer} from the referee report, and revised question \eqref{question-periodic} concerning periodic optimal colorings.

\end{document}